\documentclass[11pt]{article}
\usepackage{amsfonts,amsmath,amsthm,amssymb}
\usepackage{wrapfig}
\usepackage{graphicx}
\usepackage[colorlinks,pdfpagelabels,pdfstartview = FitH,bookmarksopen
= true,bookmarksnumbered = true,linkcolor = blue,plainpages =
false,hypertexnames = false,citecolor = red,pagebackref=false]{hyperref}
\usepackage{relsize}
\usepackage{color}
\usepackage{appendix}
\usepackage{ulem}
\usepackage[makeroom]{cancel}
\usepackage[margin=1.4in]{geometry}
\usepackage{stmaryrd}

\allowdisplaybreaks

\let\TeXchi\chi
\newbox\chibox
\setbox0 \hbox{\mathsurround0pt $\TeXchi$}
\setbox\chibox \hbox{\raise\dp0 \box 0 }
\def\chi{\copy\chibox}

\renewcommand{\d}{\mathrm{d}}
\newcommand{\dx}{\mathrm{d}x}

\newcommand{\dt}{\mathrm{d}t}
\newcommand{\ds}{\mathrm{d}s}
\renewcommand{\rho}{\varrho}

\newcommand{\umenkp}{(u-k)_+}
\newcommand{\umenpm}{(u-k)_{\pm}}

\newtheorem{proposition}{Proposition}[section]
\newtheorem{theorem}{Theorem}[section]
\newtheorem{lemma}{Lemma}[section]
\newtheorem{corollary}{Corollary}[section]
\newtheorem{remark}{Remark}[section]

\numberwithin{equation}{section}
\numberwithin{theorem}{section}
\numberwithin{proposition}{section}
\numberwithin{lemma}{section}
\numberwithin{remark}{section}
\setcounter{secnumdepth}{3}

\newcommand{\noi}{\noindent}

\newcommand{\dsty}{\displaystyle}



\newcommand{\al}{\alpha}

\newcommand{\be}{\beta}

\newcommand{\Gm}{\Gamma}
\newcommand{\gm}{\gamma}
\newcommand{\dl}{\delta}
\newcommand{\Dl}{\Delta}
\newcommand{\lm}{\lambda}

\newcommand{\varep}{\varepsilon}

\newcommand{\sig}{\sigma}

\newcommand{\om}{\omega}

\newcommand{\z}{\zeta}


\newcommand{\nn}{\mathbb{N}}

\newcommand{\rr}{\mathbb{R}}
\newcommand{\rn}{\rr^N}

\newcommand{\bl}[1]{\mathbf{#1}}




\newcommand{\dvg}{\operatorname{div}}

\newcommand{\essup}{\operatornamewithlimits{ess\,sup}}
\newcommand{\essinf}{\operatornamewithlimits{ess\,inf}}
\newcommand{\essosc}{\operatornamewithlimits{ess\,osc}}

\newcommand{\loc}{\operatorname{loc}}

\newcommand{\dist}{\operatorname{dist}}

%




\newcommand{\pl}{\partial}

\newcommand{\intl}{\int\limits}

\def\Xint#1{\mathchoice
    {\XXint\displaystyle\textstyle{#1}}%
    {\XXint\textstyle\scriptstyle{#1}}%
    {\XXint\scriptstyle\scriptscriptstyle{#1}}%
    {\XXint\scriptscriptstyle\scriptscriptstyle{#1}}%
    \!\int}
\def\XXint#1#2#3{\setbox0=\hbox{$#1{#2#3}{\int}$}
    \vcenter{\hbox{$#2#3$}}\kern-0.5\wd0}
\def\bint{\Xint-}
\def\dashint{\Xint{\raise4pt\hbox to7pt{\hrulefill}}}
\def\dashiint{\bint\kern-0.15cm\bint}

\newcommand{\ovl}[3]{\int_{#1}^{#2}\kern-#3pt\raise4pt\hbox to7pt{\hrulefill}\ }

\newcommand{\ovll}[3]{\intl_{#1}^{#2}\kern-#3pt\raise4pt\hbox to7pt{\hrulefill}\ }

\newcommand{\tvl}[2]{\iint_{#1}\kern-#2pt\raise4pt\hbox to7pt{\hrulefill}\ }




%



\newcommand{\bye}{
\begin{document}
\title{Continuity of the temperature in a multi-phase transition problem }
\author{Ugo Gianazza\\
Dipartimento di Matematica ``F. Casorati", 
Universit\`a di Pavia\\ 
via Ferrata 5, 27100 Pavia, Italy\\
email: {\tt gianazza@imati.cnr.it}
\and Naian Liao\\
Fachbereich Mathematik, Universit\"at Salzburg\\
Hellbrunner Str. 34, 5020 Salzburg, Austria\\
email: {\tt naian.liao@sbg.ac.at}
}
\date{}
\maketitle
\begin{abstract}
Locally bounded, local weak solutions to a doubly nonlinear parabolic equation,
which models the multi-phase transition of a material, is shown to be locally continuous.
Moreover, an explicit modulus of continuity is given. The effect  of the $p$-Laplacian type diffusion 
is also considered.
\vskip.2truecm
\noindent{\bf Mathematics Subject Classification (2020):} 35B65, 35K65, 35K92, 80A22
\vskip.2truecm
\noindent{\bf Key Words:} 
Phase transition, parabolic $p$-Laplacian, modulus of continuity
\end{abstract}
\section{Introduction}
The temperature $u$ of a material undergoing a multi-phase change,
for instance  ice-water-vapor, can be described by the following
nonlinear parabolic partial differential equation
\begin{equation}\label{Eq:1:0}
\pl_t\be(u)-\dvg\big(|Du|^{p-2}Du\big)\ni0\quad\text{ weakly in }E_T. 
\end{equation}
Here $E$ is an open set of $\rn$ with $N\ge1$ and  $E_T:=E\times(0,T]$ for some $T>0$.
The enthalpy $\be(\cdot)$ is a maximal monotone graph in $\rr\times\rr$ defined by (cf.~Figure~\ref{Fig-1})
\begin{equation}\label{Eq:beta}
\be(u)=u+\sum_{i=0}^{\ell} \nu_i \mathcal{H}_{e_i}(u)\quad\text{ for some } \ell\in\nn\cup\{\infty\}, \, e_i\in\rr \text{ and } \nu_i>0,
\end{equation}
where we have assumed that $0=e_o<e_1<\dots<e_{\ell}$, 
$$d:=\min\big\{e_{i+1}-e_{i}: i=0,1,\cdots,\ell-1\big\}>0,$$
and  denoted
\begin{equation*}
\mathcal{H}_{e_i}(u)=\left\{
\begin{array}{cc}
1,\quad& u>e_i,\\[5pt] 
[0,1],\quad& u=e_i,\\[5pt]
0,\quad& u<e_i.
\end{array}
\right.
\end{equation*} 
The equation \eqref{Eq:1:0} will be understood in a proper weak sense
 to be made precise later.

The {\bf main result} is that locally bounded, local weak solutions to \eqref{Eq:1:0} with $p\ge2$
are locally continuous and a modulus of continuity is explicitly quantified.

\begin{figure}[t]\label{Fig-1}
\centering
\includegraphics[scale=1]{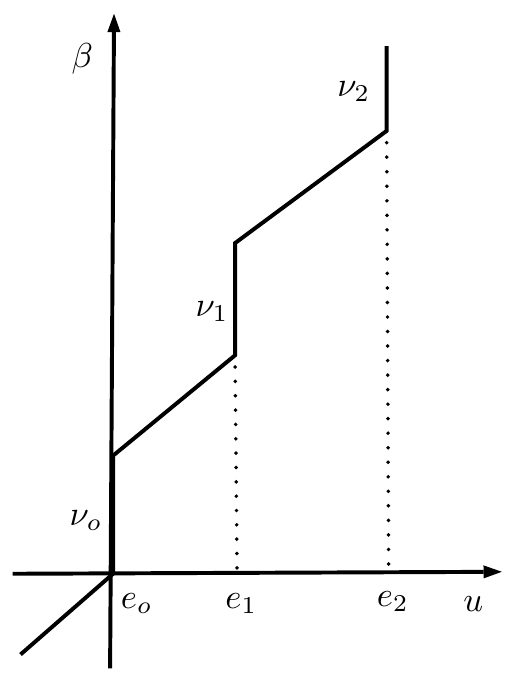}
\caption{Graph of $\be$}
\end{figure}
\subsection{Statement of the results}
From here on, we will deal with the following
more general parabolic partial differential equation modeled on \eqref{Eq:1:0}:
\begin{equation}\label{Eq:1:1}
\pl_t\be(u)-\dvg\bl{A}(x,t,u, Du) \ni 0\quad \text{ weakly in }\> E_T,
\end{equation}
where $\be(\cdot)$ is defined in \eqref{Eq:beta}.   
The function $\bl{A}(x,t,u,\xi)\colon E_T\times\rr^{N+1}\to\rn$ is assumed to be
measurable with respect to $(x, t) \in E_T$ for all $(u,\xi)\in \rr\times\rn$,
and continuous with respect to $(u,\xi)$ for a.e.~$(x,t)\in E_T$.
Moreover, we assume the structure conditions
\begin{equation}  \label{Eq:1:2}
\left\{
\begin{array}{l}
\bl{A}(x,t,u,\xi)\cdot \xi\ge C_o|\xi|^p \\[5pt]
|\bl{A}(x,t,u,\xi)|\le C_1|\xi|^{p-1}%
\end{array}%
\right .\quad \text{ a.e.}\> (x,t)\in E_T,\, \forall\,u\in\rr,\,\forall\,\xi\in\rn,
\end{equation}
where $C_o$ and $C_1$ are given positive constants, and we take $p\ge2$.

In the sequel, the set of parameters $\{d, \nu_i, p,N,C_o,C_1, \|u\|_{\infty, E_T}\}$ will be referred to as the   data.
A generic positive constant $\gm$ depending on the data will be used in the estimates.

Let $\Gm:=\pl E_T-\overline{E}\times\{T\}$
be the parabolic boundary of $E_T$, and for a compact set $\mathcal{K}\subset E_T$
introduce the parabolic $p$-distance from $\mathcal{K}$ to $\Gm$ by
\begin{equation*}
	\begin{aligned}
		\dist_p(\mathcal{K};\,\Gm)&:=\inf_{\substack{(x,t)\in \mathcal{K}\\(y,s)\in\Gm}}
		\left\{|x-y|+|t-s|^{\frac1p}\right\}.
	\end{aligned}
\end{equation*}

The formal definition of local weak solution to \eqref{Eq:1:1} will be given in \S~\ref{S:1:3}.
Now we proceed to present the main theorem, where by $\ln^{(k)}$ we mean the logarithmic function composed $k$ times.
\begin{theorem}\label{Thm:1:1}
Let $u$ be a bounded weak solution to \eqref{Eq:1:1} in $E_T$, 
 under the structure condition \eqref{Eq:1:2} for $p\ge2$.  
Then  
for every pair of points $(x_1,t_1), (x_2,t_2)\in \mathcal{K}$,
there holds that   
	\begin{equation*}
	\big|u(x_1,t_1)-u(x_2,t_2)\big|
	\le
	\boldsymbol\om\!\left(|x_1-x_2|+|t_1-t_2|^{\frac1p}\right),
	\end{equation*}
where 
$$\boldsymbol\om(r)=C \Big(\ln^{(6)} \frac{\dist_p(\mathcal{K};\,\Gm)}{c r}\Big)^{-\sig}\quad\text{ for all }r\in\big(0, \dist_p(\mathcal{K};\,\Gm)\big)$$ 
for some absolute constant $c\in(0,1)$, and for some $C>1$ and $\sig\in(0,1)$ depending on the data.
\end{theorem}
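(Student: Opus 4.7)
The plan is to prove Theorem~\ref{Thm:1:1} by establishing a quantitative oscillation-decay estimate on a nested sequence of intrinsic cylinders. Fix a point $(x_o,t_o)\in\mathcal{K}$ and let $\omega_n$ denote the essential oscillation of $u$ on the $n$-th cylinder $Q_n$ in the sequence, with $\omega_0$ controlled by $2\|u\|_{\infty,E_T}$. The goal is to produce a recursion of the form $\omega_{n+1}\le\omega_n(1-\eta_n)$ for a quantifiable $\eta_n$; summing (or rather, exponentiating) this recursion produces the iterated-logarithm modulus.

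First I would set up intrinsic cylinders adapted to both the $p$-Laplacian degeneracy and the phase-transition structure, namely cylinders of the form $K_{\rho_n}(x_o)\times(t_o-c\,\omega_n^{2-p}\rho_n^p,\,t_o]$ with $c$ also accounting for the largest jump $\nu_i$ whose level $e_i$ can possibly lie in the oscillation range. The crucial dichotomy at step $n$ is whether the interval $[\inf_{Q_n}u,\sup_{Q_n}u]$ contains any of the transition values $e_i$. In the \emph{non-degenerate} alternative, the range stays bounded away from every $e_i$ by at least a fraction of $d$, so $\beta(u)$ is bi-Lipschitz on this range and the equation reduces to a standard $p$-Laplacian problem; DiBenedetto-type De Giorgi iteration and expansion of positivity then yield a geometric reduction of oscillation.

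The \emph{degenerate} alternative --- when some $e_i$ lies inside the oscillation range --- is where the iterated logarithms arise. Here I would work with the truncations $(u-e_i)_\pm$, applying the Caccioppoli and logarithmic energy estimates prepared in the preliminary file \texttt{dibe\_1009.tex} (which supply the building blocks $\umenpm$). A further alternative splits according to whether $\{u>e_i\}$ or $\{u<e_i\}$ occupies a majority of $Q_n$: one combines a measure-theoretic lemma (propagating smallness of the minority phase from a suitable time slice to the full cylinder) with an $L^\infty$ estimate, so as to push one side of $e_i$ out of the range. Because the gain at each application of the logarithmic lemma is only of the form $(\log(1/\omega_n))^{-\sigma}$ rather than a fixed geometric factor, the recursion produces precisely the iterated-logarithm decay $\omega_n\lesssim(\ln^{(k)}(1/\rho_n))^{-\sigma}$, with $k$ equal to the number of nested logarithmic reductions used.

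The main obstacle, and the source of the exponent $6$ in $\ln^{(6)}$, is the careful accounting of these nested reductions: (i) selecting the intrinsic scaling, (ii) identifying the relevant level $e_i$ among possibly infinitely many, (iii) locating a good time slice on which the minority phase is small, (iv) spatial expansion of positivity across that slice, (v) propagation of this smallness forward in time via the logarithmic inequality, and (vi) the final $L^\infty$ reduction that converts measure-theoretic smallness into pointwise oscillation decrease. The delicate point is ensuring that the constants in each of these six steps depend only on the data (in particular on the gap $d$, not on the individual jumps $\nu_i$ nor on $\ell$), so that the scheme remains uniform when $\ell=\infty$. Once the oscillation recursion is established with explicit constants, a standard iteration over dyadic radii translates it into the claimed pointwise modulus $\boldsymbol\om(r)$.
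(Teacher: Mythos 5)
Your overall strategy — nested intrinsic cylinders, a dichotomy between a ``degenerate'' regime near the jumps of $\beta$ and a ``non-degenerate'' regime where $\beta$ is effectively bi-Lipschitz, and an oscillation recursion $\om_{n+1}\le(1-\eta_n)\om_n$ whose unwinding produces the iterated-logarithm modulus — is indeed the skeleton of the paper's argument, and the remark that the scheme must remain uniform when $\ell=\infty$ is the right concern. However, two of your central quantitative claims are off, and they conceal the actual source of the exponent $6$.

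First, the gain is much weaker than you assert. You claim $\eta_n\sim(\log(1/\om_n))^{-\sigma}$; if that were so, the recursion $\om_{n+1}=\om_n(1-\eta_n)$ would force $\om_n$ to decay like $\exp\{-n^{1/(\sigma+1)}\}$, far faster than any iterated logarithm. What the proof actually produces is a \emph{triple exponential} gain $\eta(\om)=\exp\{-\exp\{\exp\{\gm\om^{-q}\}\}\}$, because the various smallness parameters are forced to cascade: the measure fraction in the first alternative is $\al\sim\om^{(N+p)/p}$; the clustering argument then delivers a cube of relative size $\kappa\sim\om^{\bar q}$; the logarithmic propagation over the long interval $A\theta\rho^p$ forces $\bar\xi\sim\exp\{-\gm\kappa^{-p}\}=\exp\{-\gm\om^{-p\bar q}\}$; Lemma~\ref{Lm:DG:2} then gives $\xi\sim\exp\{-\bar\al^{-p/(p-1)}\}\sim\exp\{-\gm\om^{-q_2}\}$, and the shrinking parameter $j_*$ needed in Lemma~\ref{Lm:shrink:1} to trigger Lemma~\ref{Lm:DG:3} pushes $\dl=2^{-j_*}$ to $\exp\{-\om^{-q_o}\exp\{\gm\om^{-q_1}\}\}$. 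It is this chain that exponentiates three times, not six distinct ``reductions'' each contributing one logarithm.

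Second, the six in $\ln^{(6)}$ is $3+3$, not six separate steps. Three logarithms come from unwinding the triple-exponential gain: the recursion $\om_{n+1}=\om_n(1-\eta(\om_n))$ is dominated by $a_n=\big(\ln^{(3)}(n+a)\big)^{-\sigma}$. The other three come from the radius relation $\rho_{n+1}^p=2^{2-3p}A(\om_n)^{-1}\rho_n^p$ with $A(\om)=\exp\{\exp\{\gm\om^{-q}\}\}$, which is not a dyadic rescaling at all; it is this double-exponential time-stretching factor (needed so the logarithmic lemma can propagate across the whole reference cylinder) that makes $n\gtrsim\ln^{(3)}(\rho/(cr))$. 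Composing the two gives $\ln^{(6)}$.

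A further structural difference worth flagging: the paper's primary dichotomy is not ``does the range $[\mu^-,\mu^+]$ contain some $e_i$'', but the measure-theoretic alternative \eqref{Eq:1st-alt}: does \emph{some} sub-cylinder have small measure of the set $[u\approx\mu^+]$? The jump-location dichotomy \eqref{Eq:mu-minus} vs.\ \eqref{Eq:mu-minus-opp} only appears inside the near-infimum branch, and it tests whether $\mu^-$ lies within a window of width $\frac14\dl\xi\om$ (a tiny $\om$-dependent fraction of $\om$, not a fraction of $d$) around some $e_i$ — crucially the truncation levels are centered at $\mu^-+\xi\om$ rather than at $e_i$. Because $\dl\xi\om\ll\om$, the whole range may still straddle several $e_i$ in the ``away-from-jump'' case, so one does not get a geometric reduction there; the gain $\widetilde\xi\dl\xi$ in Lemma~\ref{Lm:reduc-osc-2} still degrades with $\om$. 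Finally, the local clustering Lemma~\ref{Lm:cluster}, which converts the negation of \eqref{Eq:1st-alt} into the slicewise measure bound \eqref{Eq:measure:0}, is the technical hinge of the whole near-infimum analysis; your sketch never makes this step explicit, and the plan cannot close without it.
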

\begin{remark}\upshape
All constants in Theorem~\ref{Thm:1:1} are stable as $p\downarrow2$. 
\end{remark}
\begin{remark}\upshape
Even though all the proofs are given for the specific $\beta$  in \eqref{Eq:beta}, nevertheless a more general graph can be considered, namely 
\begin{equation}\label{Eq:gen:beta}
\be(u)=\be_{AC}(u)+\sum_{i=0}^{\ell} \nu_i \mathcal{H}_{e_i}(u)\quad\text{ for some } \ell\in\nn\cup\{\infty\},\, e_i\in\rr \text{ and } \nu_i>0,
\end{equation}
where $\beta_{AC}=\be_{AC}(s)$ denotes an absolutely continuous and hence a.e. differentiable function in $\rr$, such that 
\[
0<\al_o\leq\beta_{AC}'(s)\leq\al_1,
\]
for two positive constants $\alpha_o$ and $\alpha_1$. This reflects the fact that the thermal properties of the material under consideration might change according to the temperature.
The graph \eqref{Eq:gen:beta} can be reduced to \eqref{Eq:beta} by a straightforward adaption of the change of variables introduced in \cite[\S~1]{DB-82}. Furthermore, Theorem~\ref{Thm:1:1} continues to hold for \eqref{Eq:1:1} with lower order terms, which take into account the convection resulting from the heat transfer. Again, the modifications of the proofs can be modeled on the arguments in \cite{DB-82,DB-86,DB}, but we refrain from pursuing generality in this direction, focusing instead  on the actual novelties.
\end{remark}

Theorem~\ref{Thm:1:1} bears global information of $\be$ through the range of $u$. 
However, once a modulus of continuity is obtained,
we can confine the range of $u$ by restricting space-time distance, such that $u$ only experiences one jump of $\be$ at most.
\begin{corollary}[Localization]\label{Cor:1:2} 
Under the hypotheses of Theorem~\ref{Thm:1:1},
the modulus improves automatically to the one for the two-phase problem.
\end{corollary}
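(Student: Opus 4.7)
The plan is a one-shot bootstrap: feed Theorem~\ref{Thm:1:1} back into itself to force the oscillation of $u$ below the interjump gap $d$, and then read off the sharper modulus from the two-phase theory.

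First, fix a compact set $\mathcal{K}\subset E_T$ and apply Theorem~\ref{Thm:1:1} on $\mathcal{K}$. Since $\boldsymbol\om(r)\to 0$ as $r\downarrow 0$ and $d>0$ depends only on the data, there is a scale $r_0\in\big(0,\dist_p(\mathcal{K};\Gm)\big)$, again depending only on the data, such that $\boldsymbol\om(r_0)\le d/4$. For any $(x_0,t_0)\in\mathcal{K}$, Theorem~\ref{Thm:1:1} then forces the range of $u$ on the parabolic cylinder $Q_{r_0}(x_0,t_0)\subset E_T$ to have length at most $d/2$. By the very definition $d=\min_i(e_{i+1}-e_i)$, such a range meets at most one of the transition levels $e_i$.

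Second, on $Q_{r_0}(x_0,t_0)$ the graph $\beta$ restricted to the range of $u$ agrees, up to an additive constant, either with the identity or with a single-jump graph $\widetilde\beta(s)=s+\nu_i\mathcal{H}_{e_i}(s)$. Hence $u$ is a bounded weak solution on $Q_{r_0}(x_0,t_0)$ of a genuinely two-phase equation of the form \eqref{Eq:1:1}. Applying the two-phase continuity estimate—which has strictly fewer logarithmic iterations than the multi-phase one in Theorem~\ref{Thm:1:1}—on $Q_{r_0}(x_0,t_0)$ gives the claimed sharper modulus for pairs of points in $\mathcal{K}$ at parabolic distance less than $r_0$. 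For pairs at larger parabolic distance the inequality is trivial since $u$ is bounded and $r_0$ itself is fixed by the data.

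The main obstacle is the verification that the reduction to a two-phase equation in the second step is legitimate in the weak sense: any selection $v\in\beta(u)$ on $Q_{r_0}(x_0,t_0)$ must decompose as $v=\widetilde v+c$ with $c$ constant in $(x,t)$ and $\widetilde v\in\widetilde\beta(u)$ a valid selection of the single-jump graph. This is in fact immediate because all the Heaviside contributions $\mathcal{H}_{e_j}(u)$ with $j\neq i$ are frozen at $0$ or $1$ on $Q_{r_0}(x_0,t_0)$, so their sum is a genuine constant there. Once this is granted, no other modification of the argument is needed, and the improvement of the modulus is automatic from the two-phase theory.
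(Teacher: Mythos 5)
Your proposal matches the paper's (unstated) strategy exactly: as the authors themselves remark in the introduction and in \S~\ref{S:modulus}, once a quantitative modulus is in hand one restricts the space-time distance so that $u$ experiences at most one jump of $\beta$, reducing locally to a single-jump graph and hence to the two-phase modulus \eqref{Eq:ln} of \cite{Urbano-14}. The paper declines to write this out on the grounds that it would essentially reproduce \cite{Urbano-14}; your bootstrap via Theorem~\ref{Thm:1:1} at a scale $r_0$ with $\boldsymbol\om(r_0)\le d/4$, together with the observation that the off-range Heaviside contributions freeze to constants so the selection decomposes cleanly, supplies precisely the omitted details.
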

\subsection{Novelty and significance}
Graphs $\be$ such as the one in \eqref{Eq:beta}, but exhibiting just a single jump, say at the origin, arise from a weak formulation of the classical Stefan problem, which models a liquid/solid phase transition, such as water/ice. It is quite natural to ask whether the transition of phase occurs with a continuous temperature across the water/ice interface. This question was initially raised in a 1960 paper of {Ole\u{\i}nik} (see \cite{Olienik}) and was later reported in \cite[Chapter~V, \S~9]{LSU}. Since then an important research field was born, and soon new problems  started to be posed, besides the one originally formulated by {Ole\u{\i}nik} in her 1960 paper. The interested reader can refer to \cite{Visintin}, to have at least an overview of the huge development that the research about the Stefan problem has witnessed. In these notes the issue is the regularity of local solutions, the ultimate goal being to prove the continuity of solutions to \eqref{Eq:1:0} for a general maximal monotone graph $\be$. Such a result has not been achieved yet, even though it is clear that the coercivity of $\be$ is essential for a solution to be continuous, as pointed out by examples  in \cite{DiBe-Gar}.

Continuity results for \eqref{Eq:1:0} with $\be$ as in \eqref{Eq:beta} but with a {\it single} jump, and $p=2$, have been given in \cite{Caff-Evans-83, DB-82,Sacks,Ziemer}. 
Moreover, Ziemer proved the continuity up to the boundary for general Dirichlet boundary data. Whereas Caffarelli and Evans heavily relied on the properties of the Laplacian, and their result cannot be extended to the full quasilinear case of \eqref{Eq:1:1}, 
DiBenedetto's approach is flexible enough to deal with the general framework, and it also allows lower order terms, which are thoroughly justified from a physical point of view, since they describe convection phenomena.

A quantitative estimate on the modulus of continuity, still in the case of a single jump and $p=2$, was given in \cite[Remark~3.1]{DiBe-Fried}, but without proof. Few years later, DiBenedetto quantified Ziemer's results, and in \cite{DB-86} proved that solutions have a boundary modulus of continuity of the kind
\begin{equation}\label{Eq:lnln}
\boldsymbol\om(r)=C\Big[\ln\ln\Big(\frac{R}{cr}\Big)\Big]^{-\sig},\qquad C>1,\, c,\,\sig\in(0,1),\,r\in(0,R).
\end{equation}

A major step forward towards a full proof of the local continuity of solutions to \eqref{Eq:1:1} with $p=2$ and $\be$ a general maximal monotone graph in $\rr\times\rr$, is represented by \cite{DBV-95}; the authors proved that locally bounded, weak solutions are locally continuous, and the modulus of continuity can be quantitatively estimated only in terms of the data, even though an explicit expression of such a modulus is not provided in the paper. The proof is given in full generality for $N=2$, whereas for $N\ge3$ it relies on a proper comparison function, and therefore, it is limited to ${\bl A}=Du$. The paper is quite technical, but a thorough and clear presentation of the methods employed is given in \cite[\S~5]{DBUV}; the list of references therein gives a comprehensive state of the art at the moment of its publication.

To our knowledge, the first paper to deal with $p>2$ is \cite{Urbano-00}: besides its intrinsic mathematical interest, the nonlinear diffusion operator with growth of order larger than 2 naturally takes into account non-Newtonian filtration phenomena.

For a few years there were basically no further improvements, as far as the continuity issue is concerned. Things changed with \cite{Urbano-14}: the authors consider \eqref{Eq:1:1} with $p\ge2$ and \eqref{Eq:beta} with a single jump, and they derive an explicit modulus of continuity better than \eqref{Eq:lnln}, namely
\begin{equation}\label{Eq:ln}
\boldsymbol\om(r)=C\Big[\ln\Big(\frac{R}{cr}\Big)\Big]^{-\sig},\qquad C>1,\,  c,\,\sig\in(0,1),\,r\in(0,R),
\end{equation}
with $\sig$ precisely quantified just in terms of $N$ and $p$, which they conjecture to be optimal. In \cite{BKLU-18} the result is extended up to the boundary: under the same conditions as before about the equation, and assuming a positive geometric density condition at the boundary $\partial E$, solutions to the Dirichlet problem have a modulus of continuity as in \eqref{Eq:lnln}, yet weaker than \eqref{Eq:ln}.

Further progress has been recently made in \cite{Liao-Stefan, Liao-improved}.
Indeed, interior moduli  sharper than \eqref{Eq:ln} are provided in \cite{Liao-improved} for $p=2$ and $N=1,2$.
On the other hand, under the same general conditions as in \cite{BKLU-18}, 
the boundary modulus of continuity has been  improved to \eqref{Eq:ln} in \cite{Liao-Stefan}: for Dirichlet boundary conditions, any $p\ge2$ can do; whereas for Neumann boundary conditions only $p=2$ could be dealt with, while the case $p>2$ remains an open problem.

With respect to the existing literature described so far, the present work represents a step forward, at least under two different points of view.

First of all, we consider an arbitrary number of jumps of $\be$, and not just a single discontinuity; this case has already been dealt with in \cite{GV-03}, but only for $p=2$, whereas here we work with $p\ge2$. Moreover, even though some of the techniques employed in \cite{GV-03} and here are comparable, the general approach we follow is definitely different.

 The other novelty is given by the explicit modulus of continuity in Theorem~\ref{Thm:1:1}:
to our knowledge, it is the first time that a modulus is explicitly stated for a $\be$ that is more general than the one considered in \cite{Urbano-14,BKLU-18,Liao-Stefan, Liao-improved}.  Due to the wide generality assumed on $\be$, i.e. arbitrary number of jumps and arbitrary height for each single jump, the parameter  $\sig$ depends on the data, that is, also on $\|u\|_{\infty,E_T}$.
Providing an optimal modulus of continuity that carries global information of $\be$ is a difficult task, and we are well aware that the one shown in Theorem~\ref{Thm:1:1} seems far from being the best possible. Nevertheless,  as we have  pointed out in Corollary~\ref{Cor:1:2}, the importance of a {\it quantitative} continuity statement lies in the fact that once we have it, the same result implies that the modulus can be automatically improved to the one
for the two-phase problem (single-jump); indeed, by restricting the space-time
distance, $u$ can be confined, so that it experiences one jump of $\be$ at most, and we end up having the modulus given in \eqref{Eq:ln}. We refrained from going into details about the proof of Corollary~\ref{Cor:1:2}, since we would basically have to reproduce what was done in \cite{Urbano-14}.

%
Moreover, in a forthcoming paper we plan to address a multi-phase transition problem with a maximal monotone graph $\be$ as in \eqref{Eq:gen:beta}, without assuming that  $\be_{AC}'$ is bounded above: besides its intrinsic mathematical interest, this is what occurs, for example, in the so-called Buckley-Leverett model for the motion of two immiscible fluids in a porous medium (see \cite{KrS,Le}). In such a case, $\be$ presents two singularities, say at $u=0$ and $u=1$, where $\be$ can become vertical with an exponential speed, or even faster, and might also exhibit a jump. 
\subsection{Definition of solution}\label{S:1:3} 
A function
\begin{equation*}
	u\in L_{\loc}^{\infty}\big(0,T;L^2_{\loc}(E)\big)\cap L^p_{\loc}\big(0,T; W^{1,p}_{\loc}(E)\big)
\end{equation*}
is a local, weak sub(super)-solution to \eqref{Eq:1:1} with the structure
conditions \eqref{Eq:1:2}, if for every compact set $K\subset E$ and every sub-interval
$[t_1,t_2]\subset (0,T]$, there is a selection $v\subset\be(u)$, i.e.
\[
\left\{\big(z,v(z)\big): z\in E_T\right\}\subset \left\{\big(z,\be[u(z)]\big): z\in E_T\right\},
\]
 such that
\begin{equation*}
	\int_K v\z \,\dx\bigg|_{t_1}^{t_2}
	+
	\iint_{K\times(t_1,t_2)} \big[-v\pl_t\z+\bl{A}(x,t,u,Du)\cdot D\z\big]\dx\dt
	\le(\ge)0
\end{equation*}
for all non-negative test functions
\begin{equation*}
\z\in W^{1,2}_{\loc}\big(0,T;L^2(K)\big)\cap L^p_{\loc}\big(0,T;W_o^{1,p}(K)%
\big).
\end{equation*}
Observe that $v\in L^{\infty}_{\loc} \big(0,T;L^2_{\loc}(E)\big)$ and hence  all the integrals 
are well-defined. A function that is both a local, weak sub-solution and a local, weak super-solution is termed a local, weak solution.

We will consider the regularized version of the Stefan problem \eqref{Eq:1:1}.
For a parameter $\varep\in(0,\tfrac12 d)$,
we introduce the function
\begin{equation*}
\mathcal{H}_{e_i}^{\varep}(u):=\left\{
\begin{array}{cc}
1,\quad& u>e_i+\varep,\\[5pt] 
\frac1{\varep}(u-e_i),\quad& e_i\le u\le e_i+\varep,\\[5pt]
0,\quad& u<e_i,
\end{array}
\right.
\end{equation*} 
and define the mollification of $\be$ by
\begin{equation*}
\be_\varep(u)\equiv u+H_\varep(u):=u+\sum_{i=0}^{\ell} \nu_i \mathcal{H}_{e_i}^{\varep}(u);
\end{equation*}
we now deal with
\begin{equation}\label{Eq:reg}
\begin{aligned}
&\pl_t [u+H_\varep(u)] -\dvg\bl{A}(x,t,u, Du) \le (\ge) 0\quad \text{ weakly in }\> E_T.
\end{aligned}
\end{equation}
Sub(super)-solutions to \eqref{Eq:reg} are defined like for the parabolic $p$-Laplacian as in \cite[Chapter~II]{DB}.
Hence, the  solutions to \eqref{Eq:reg} are generally not smooth. 

In this note we assume that local solutions to \eqref{Eq:1:1}
can be approximated by a sequence of solutions to \eqref{Eq:reg} locally uniformly.
This approximating approach parallels the one in \cite{DBV-95}, yet with a more particular $\be_\varep$ 
and the $p$-Laplacian here. The goal is to establish an estimate on the modulus of continuity
for the approximating solutions uniform in $\varep$, which grants the same modulus to the limiting function.

There is yet another notion of solution, which requires a solution to possess time derivative in the Sobolev sense,
cf.~\cite{DB-82, DB-86, Liao-Stefan, Liao-improved}.
Theorem~\ref{Thm:1:1} continues to hold for that kind of notion and the proof calls for minor modifications from the one given here.
The advantage is that an approximating scheme is not needed. However, the preset requirement on
the time derivative is usually too strong to guarantee the continuity of a constructed solution in the existence theory.

%
\subsection{Structure of the proof}
Since the paper is technically involved, we think it better to first discuss the main ideas in an informal way.

Roughly speaking, we follow an approach that is by now  standard when dealing with the continuity of solutions to degenerate parabolic equations: starting from a properly built reference cylinder, we have two alternatives: either we can find a sub-cylinder, such that the set where $u$ is close to its supremum is small, or such a sub-cylinder cannot be determined. 

In the first case, we can show  a reduction of oscillation near the supremum, and this is accomplished in \S~\ref{S:1st-alt}; the second alternative is more difficult and will be taken on in \S\S~\ref{S:2nd-alt} -- \ref{S:reduc-osc-3}, where we prove a reduction of oscillation near the infimum, assuming that such an infimum is actually close to one of the discontinuity points; finally, the case of the infimum being properly far from all the discontinuity points is dealt with in \S~\ref{S:reduc-osc-4}. Indeed, this last possibility is the easiest one, since the equation behaves as though it were the parabolic $p$-Laplacian with $p\ge2$.

All the alternatives are quantified, and the structural dependences of the various constants are carefully traced, and this eventually leads to an estimate of the modulus of continuity in  \S~\ref{S:modulus}.  As pointed out in Corollary~\ref{Cor:1:2}, once established, the modulus improves automatically to the one for the two-phase problem. Indeed, in general, $\om$ could be large,  and $d$ could be small: our argument shrinks $\om$ step by step to an oscillation less than $d$ across all potential jumps, and this quantification is precisely what eventually gives the modulus of \eqref{Eq:ln}.

\

\noi{\it Acknowledgement. } 
U. Gianazza was supported by the grant 2017TEXA3H\_002 ``Gradient flows, Optimal Transport and Metric Measure Structures". N. Liao was supported by the FWF--Project P31956--N32 ``Doubly nonlinear evolution equations". 
We thank the referees for their careful reading and comments.
\section{Preliminary tools}
\subsection{Energy estimates}
Here and in the following, we denote by $K_\rho(x_o)$ the cube of side length $2\rho$ and center $x_o$, with faces parallel to the coordinate planes of $\rr^N$, and for $k\in\rr$ we let the truncations $(u-k)_+$ and $(u-k)_-$
be defined by
\[
(u-k)_+\equiv\max\{u-k, 0\},\qquad (u-k)_-\equiv\max\{k-u, 0\}.
\] 
We can repeat almost verbatim the calculations in \cite[\S~2]{DBV-95} modulo 
proper mollification in the time variable, and prove the following estimates.
\begin{proposition}\label{Prop:2:1}
	Let $u$ be a  local weak sub(super)-solution to \eqref{Eq:reg} with \eqref{Eq:1:2} in $E_T$.
	There exists a constant $\gm (C_o,C_1,p)>0$, such that
 	for all cylinders $Q_{R,S}=K_R(x_o)\times (t_o-S,t_o)\subset E_T$,
 	every $k\in\rr$, and every non-negative, piecewise smooth cutoff function
 	$\z$ vanishing on $\pl K_{R}(x_o)\times(t_o-S,t_o)$,  there holds
\begin{align}\label{Eq:en-est-gen}
	\essup_{t_o-S<t<t_o}&\frac12\int_{K_R(x_o)\times\{t\}}	
	\z^p (u-k)_\pm^2\,\dx\nonumber\\
	&\quad + \essup_{t_o-S<t<t_o}\int_{K_R(x_o)\times\{t\}}	
	\left(\int_0^{(u-k)_\pm}H'_\varep(k\pm s) s\,\ds\right)\z^p\,\dx\nonumber\\
	&\quad+
	\iint_{Q_{R,S}}\z^p|D(u-k)_\pm|^p\,\dx\dt\nonumber\\
	&\le
	\gm\iint_{Q_{R,S}}
		\Big[
		(u-k)^{p}_\pm|D\z|^p + (u-k)_\pm^2|\partial_t\z^p|
		\Big]
		\,\dx\dt\\
	&\quad+\iint_{Q_{R,S}}\left(\int_0^{(u-k)_\pm}H'_\varep(k\pm s)s\,\d s\right)|\partial_t\z^p|\,\dx\dt\nonumber\\
	&\quad
	+\frac12\int_{K_R(x_o)\times \{t_o-S\}} \z^p (u-k)_\pm^2\,\dx\nonumber\\
	&\quad+\int_{K_R(x_o)\times\{t_o-S\}}	
	\left(\int_0^{(u-k)_\pm}H'_\varep(k\pm s) s\,\ds\right)\z^p\,\dx\nonumber.
\end{align}
\end{proposition}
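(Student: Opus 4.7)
The plan is to derive \eqref{Eq:en-est-gen} by the standard Caccioppoli argument for degenerate parabolic equations, following \cite[\S~2]{DBV-95} with a time-mollification that is needed because local weak sub/super-solutions to \eqref{Eq:reg} have no pointwise time derivative. First I would replace $u+H_\varep(u)$ by its Steklov average $[u+H_\varep(u)]_h$ in the weak formulation, obtaining a relation pointwise in $t$, and recover the announced inequality by passing $h\downarrow0$ at the end.

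At the mollified level I would test the inequality with $\phi = \z^p (u-k)_\pm$, which is admissible because $\z$ vanishes on the lateral boundary $\pl K_R(x_o)\times (t_o-S,t_o)$. Integrating in time from $t_o-S$ to an arbitrary $t\in(t_o-S,t_o)$ then splits the proof into contributions from the diffusion term and from the time-derivative term. For the former, the structure conditions \eqref{Eq:1:2} together with Young's inequality give
\[
\iint \z^p\,\bl{A}(x,t,u,Du)\cdot D(u-k)_\pm\,\dx\dt
\ge \tfrac12 C_o\iint \z^p|D(u-k)_\pm|^p\,\dx\dt
-\gm\iint (u-k)_\pm^p|D\z|^p\,\dx\dt,
\]
and the factor $\tfrac12 C_o$ can be absorbed into the normalization of $\gm$ to match \eqref{Eq:en-est-gen}.

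For the time-derivative term, the contribution of $\pl_t u$ is handled via the chain-rule identity $\pm(u-k)_\pm\pl_t u=\tfrac12\pl_t(u-k)_\pm^2$ and integration by parts against $\z^p$, which produces the pure $L^2$-storage terms $\tfrac12\int \z^p(u-k)_\pm^2\,\dx$ at $t$ and at $t_o-S$, plus the error $\iint(u-k)_\pm^2|\pl_t\z^p|\,\dx\dt$. The $\pl_tH_\varep(u)$ contribution is the genuinely new ingredient. Setting
\[
G_\pm(u):=\int_0^{(u-k)_\pm}H'_\varep(k\pm s)\,s\,\ds,
\]
a direct case analysis on $\{u>k\}$ and $\{u\le k\}$ shows the chain-rule identity $\pm H'_\varep(u)(u-k)_\pm\pl_t u = \pl_t G_\pm(u)$. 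Integration by parts in time then yields the $H'_\varep$-weighted storage terms at the two time endpoints and an error of the form $\iint G_\pm(u)|\pl_t\z^p|\,\dx\dt$. Taking the essential supremum over $t\in(t_o-S,t_o)$ of the endpoint contributions and collecting terms produces exactly \eqref{Eq:en-est-gen}.

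The main technical obstacle lies in justifying the Steklov approximation for the $H'_\varep$-weighted integrals and in passing to the limit there; the nonnegativity $H'_\varep\ge 0$ is essential, since it guarantees $G_\pm(u)\ge 0$ and places these quantities on the correct side of the inequality as storage quantities rather than sign-indefinite error terms. Apart from this bookkeeping, the argument is entirely parallel to \cite[\S~2]{DBV-95}, which is precisely why the authors defer to that reference.
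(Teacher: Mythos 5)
Your proposal is correct and follows exactly the route the paper intends: the paper's proof is nothing more than a pointer to \cite[\S~2]{DBV-95} "modulo proper mollification in the time variable," and you have spelled out precisely that argument — Steklov averaging of $u+H_\varep(u)$, testing with $\z^p(u-k)_\pm$, using \eqref{Eq:1:2} plus Young's inequality for the spatial part, and the two chain-rule identities $\pm(u-k)_\pm\pl_t u=\tfrac12\pl_t(u-k)_\pm^2$ and $\pm H'_\varep(u)(u-k)_\pm\pl_t u=\pl_t G_\pm(u)$ for the parabolic part, which produce the storage, boundary, and $|\pl_t\z^p|$ error terms of \eqref{Eq:en-est-gen}.
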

The general formulation of \eqref{Eq:en-est-gen} can be simplified, if we take into account the specific structure of $H_\varep$. In particular, since $H'_\varep\ge0$, the second term on the left-hand side can be dropped.
On the other hand, 
since $H_\varep$ is a linear combination of Heaviside functions (an increasing step function) modulo $\varep$,
we have
\[
\int_0^{(u-k)_\pm}H'_\varep(k\pm s) s\,\ds\le (u-k)_\pm \int_0^{(u-k)_\pm}H'_\varep(k\pm s) \,\ds\le \Big(\sum_{i=0}^{\ell}\nu_i\Big)(u-k)_\pm,
\]
provided $\sum_{i=0}^{\ell}\nu_i$ is finite. Instead, if it is infinite, 
we let
\begin{equation*}
M:=\|u\|_{\infty,E_T},
\end{equation*} 
and estimate
\[
\int_0^{(u-k)_\pm}H'_\varep(k\pm s) s\,\ds\le\sup_{-M\le s\le M}|H_\varep(s)|(u-k)_\pm.
\]
Hence, in this case the subsequent estimates will depend also on $M$, but are independent of $\varep$.

With all these remarks, \eqref{Eq:en-est-gen} becomes,
\begin{equation*}
\begin{aligned}
	\essup_{t_o-S<t<t_o}&\frac12\int_{K_R(x_o)\times\{t\}}	
	\z^p (u-k)_\pm^2\,\dx +
	\iint_{Q_{R,S}}\z^p|D(u-k)_\pm|^p\,\dx\dt\\
	&\le
	\gm\iint_{Q_{R,S}}
		\Big[
		(u-k)^{p}_\pm|D\z|^p + (u-k)_\pm^2|\partial_t\z^p|
		\Big]
		\,\dx\dt\\
			&\quad+\gm\iint_{Q_{R,S}}(u-k)_\pm |\pl_t\z^p|\, \dx\dt\\
			&\quad
	+\frac12\int_{K_R(x_o)\times \{t_o-S\}} \z^p (u-k)_\pm^2\,\dx\\
	&\quad  +\gm \int_{K_R(x_o)\times\{t_o-S\}} \z^p (u-k)_\pm \dx,
\end{aligned}
\end{equation*}
where the constant $\gm$ depends only on the data but $M$, if $\sum_{i=0}^{\ell}\nu_i$ is finite. If it is infinite,
the constant $\gm$ also depends on $M$. 


If we choose the cutoff function $\z$ such that $\z(\cdot,t_o-S)=0$, then we obtain
\begin{equation}\label{Eq:energy-bis}
\begin{aligned}
	\essup_{t_o-S<t<t_o}&\frac12\int_{K_R(x_o)\times\{t\}}	
	\z^p (u-k)_\pm^2\,\dx +
	\iint_{Q_{R,S}}\z^p|D(u-k)_\pm|^p\,\dx\dt\\
	&\le
	\gm\iint_{Q_{R,S}}
		\Big[
		(u-k)^{p}_\pm|D\z|^p + (u-k)_\pm^2|\partial_t\z^p|
		\Big]
		\,\dx\dt\\
			&\quad+\gm\iint_{Q_{R,S}}(u-k)_\pm |\pl_t\z^p|\, \dx\dt,
\end{aligned}
\end{equation}
which corresponds to estimate (2.5) of \cite{DBV-95}. 

On the other hand, if we choose the cutoff function such that $\z=\z(x)$, i.e. independent of $t$, we get 
\begin{equation}\label{Eq:energy-ter}
\begin{aligned}
	\essup_{t_o-S<t<t_o}&\frac12\int_{K_R(x_o)\times\{t\}}	
	\z^p (u-k)_\pm^2\,\dx +
	\iint_{Q_{R,S}}\z^p|D(u-k)_\pm|^p\,\dx\dt\\
	&\le
	\gm\iint_{Q_{R,S}}
		(u-k)^{p}_\pm|D\z|^p 	\,\dx\dt\\
			&\quad
	+\frac12\int_{K_R(x_o)\times \{t_o-S\}} \z^p (u-k)_\pm^2\,\dx +\gm \int_{K_R(x_o)\times\{t_o-S\}} \z^p (u-k)_\pm \dx,
\end{aligned}
\end{equation}
which corresponds to estimate (2.6) of \cite{DBV-95}.
\subsection{Logarithmic estimates}
The following logarithmic energy estimate will be useful;
the case $p=2$ has been derived in \cite[(3.13)]{GV-03} (see also \cite[(2.7)]{DBV-95}).
To this end, letting $k$, $u$ and $Q_{R,S}$ be as in Proposition~\ref{Prop:2:1}, we set 
\[
\mathcal{L}:=\sup_{Q_{R,S}}(u-k)_{\pm},
\]
take $c\in(0,\mathcal{L})$, and
 introduce the following function  in $Q_{R,S}$:
\[
\Psi(x,t)\equiv\Psi\big(\mathcal{L},(u-k)_{\pm}, c\big):=\ln_+\Big(\frac{\mathcal{L}}{\mathcal{L}-(u-k)_{\pm}+c}\Big).
\]
This function enjoys the following estimate.
\begin{proposition}\label{Lm:log-est}
Let the hypotheses in Proposition~\ref{Prop:2:1} hold.
There exists $\gm>1$ depending only on the data and on $M$, such that for any $\sig\in(0,1)$,
\begin{align*}
\sup_{t_o-S\le t\le t_o}&\int_{K_{\sig R}(x_o)}\Psi^2(x,t)\,\dx\le \int_{K_{R}(x_o)}\Psi^2(x,t_o-S)\,\dx\\
&+\frac\gm{c}\int_{K_{R}(x_o)}\Psi(x,t_o-S)\,\dx+\frac{\gm}{(1-\sig)^p R^p}\iint_{Q_{R,S}}\Psi |\Psi_u|^{2-p}\,\dx\dt.
\end{align*}
\end{proposition}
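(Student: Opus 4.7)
My strategy is to test the regularized equation \eqref{Eq:reg} against
$$\varphi(x,t)=\zeta^p(x)\,(\Psi^2)'(u)=2\,\zeta^p(x)\,\Psi\,\Psi_u,$$
where $\zeta=\zeta(x)$ is a time-independent, piecewise smooth cutoff with $\zeta\equiv 1$ on $K_{\sigma R}(x_o)$, $\zeta\equiv 0$ outside $K_R(x_o)$, and $|D\zeta|\le\gamma/((1-\sigma)R)$. I treat the $(u-k)_+$ case (sub-solution); the $(u-k)_-$ case is symmetric. Since $\Psi,\Psi_u\ge 0$ and $\zeta\ge 0$, the test function is admissible, modulo a Steklov-averaging step in time to justify the formal chain rule. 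For the parabolic term I introduce the primitive
$$G(u):=\int_k^u\beta_\varepsilon'(s)\,(\Psi^2)'(s)\,ds=\Psi^2(u)+\int_k^u H_\varepsilon'(s)\,2\Psi(s)\Psi_u(s)\,ds,$$
so that $\partial_t\beta_\varepsilon(u)\,\varphi=\zeta^p\,\partial_t G(u)$, and integration over $(t_o-S,t)$ produces $\int_{K_R}\zeta^p\bigl[G(u(\cdot,t))-G(u(\cdot,t_o-S))\bigr]dx$. On the LHS I retain only $\Psi^2(u(\cdot,t))\le G(u(\cdot,t))$. At the initial slice, using that both $\Psi$ and $\Psi_u$ are nondecreasing in $u$ and that $\Psi_u\le 1/c$,
$$\int_k^u H_\varepsilon'(s)\,2\Psi(s)\Psi_u(s)\,ds\ \le\ \frac{2\Psi(u)}{c}\bigl[H_\varepsilon(u)-H_\varepsilon(k)\bigr]\ \le\ \frac{\gamma}{c}\,\Psi(u),$$
where the last step uses $\sum_i\nu_i<\infty$ or, alternatively, $\|H_\varepsilon\|_\infty\le\gamma(M)$. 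This already yields the first two terms on the right-hand side of the claim.

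For the diffusion term, I expand
$$\iint\mathbf{A}\cdot D\varphi=\iint\zeta^p(\Psi^2)''(u)\,\mathbf{A}\cdot Du+p\iint\zeta^{p-1}(\Psi^2)'(u)\,\mathbf{A}\cdot D\zeta.$$
A direct calculation gives $(\Psi^2)''(u)=2(1+\Psi)\Psi_u^2\ge 0$, so the coercivity in \eqref{Eq:1:2} bounds the first integral from below by $2C_o\iint\zeta^p(1+\Psi)\Psi_u^2|Du|^p\,dx\,dt$. The growth bound controls the cross term by $2pC_1\iint\zeta^{p-1}\Psi\Psi_u|Du|^{p-1}|D\zeta|\,dx\,dt$. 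Writing
$$\zeta^{p-1}\Psi\Psi_u|Du|^{p-1}|D\zeta|\ \le\ \bigl[\zeta^p(1+\Psi)\Psi_u^2|Du|^p\bigr]^{(p-1)/p}\bigl[\Psi\,|\Psi_u|^{2-p}\,|D\zeta|^p\bigr]^{1/p},$$
which is valid because $\Psi\le(1+\Psi)^{(p-1)/p}\Psi^{1/p}$, and applying Young's inequality with conjugate exponents $p/(p-1)$ and $p$, I obtain
$$\le\delta\iint\zeta^p(1+\Psi)\Psi_u^2|Du|^p\,dx\,dt+C(\delta)\iint\Psi\,|\Psi_u|^{2-p}\,|D\zeta|^p\,dx\,dt.$$
Choosing $\delta$ small absorbs the first piece into the coercive term; the second piece, combined with $|D\zeta|^p\le\gamma/((1-\sigma)^pR^p)$ on the support of $D\zeta$, supplies the third term on the right-hand side. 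Taking the essential supremum in $t\in(t_o-S,t_o)$ then completes the argument.

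The main subtlety I anticipate is the handling of the $H_\varepsilon$-contribution to the time-derivative term: one must simultaneously exploit the monotonicity of $\Psi$ and $\Psi_u$ in $u$, to pull $\Psi(u)$ out of the $s$-integral as a linear factor, and the global $L^\infty$ bound on $H_\varepsilon$, which is what forces the dependence on $M$ when $\sum_i\nu_i=\infty$; a less careful estimate would produce a $\Psi^2/c$ term and spoil the bound. The $p$-Laplacian modification relative to the $p=2$ case of \cite{DBV-95,GV-03} is entirely contained in the Young-inequality split above, which is what produces the $\Psi|\Psi_u|^{2-p}$ factor in place of the usual $\Psi$.
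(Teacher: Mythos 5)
Your proposal is correct and follows essentially the same route as the paper: the test function $\zeta^p(\Psi^2)'(u)$ is, up to a factor $2$ and a sign, the paper's $\pm\Psi\Psi'\zeta^p$; the treatment of the $H_\varepsilon$ contribution (pull out $\Psi(u)$ by monotonicity, bound $\Psi_u$ by $1/c$, and use $\sum\nu_i<\infty$ or $\|H_\varepsilon\|_\infty\le\gamma(M)$) is the same estimate the paper carries out; and the Young-inequality split producing $\Psi|\Psi_u|^{2-p}$ coincides with the paper's final step.
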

\begin{proof}
To simplify the symbolism, we denote $\Psi(s)\equiv \Psi\big(\mathcal{L},s, c\big)$ and its derivative  $\Psi'$.
In the weak formulation we use the test function $\pm\Psi\Psi'\z^p$, with $\z=\z(x)$ and  
\[
\left\{
\begin{aligned}
&\zeta\equiv1\ \text{ on }\ K_{\sig R}(x_o),\\ 
&\zeta=0\ \text{ for }\ x\in\partial K_R(x_o),\\
&|D\zeta|\le\frac1{(1-\sig)R}.
\end{aligned}
\right.
\]
We work in the cylinder $K_R(x_o)\times(t_o-S,t]$ with $t\in(t_o-S,t_o]$. 
Observe that
\[
\pm H_\varep'(u) \partial_t u \Psi\Psi'=\partial_t\int_0^{(u-k)_\pm} H_\varep'(k\pm s)\Psi(s) \Psi'(s)\, \d s.
\]
By the arbitrariness of $t\in(t_o-S,t_o]$, we easily obtain
\begin{align*}
\sup_{t_o-S\le t\le t_o}&\frac12\int_{K_R(x_o)}\Psi^2(x,t)\z^p(x)\,\dx\nonumber\\
&+\sup_{t_o-S\le t\le t_o}\int_{K_R(x_o)\times\{t\}}\left(\int_0^{(u-k)_\pm} H_\varep'(k\pm s)\Psi(s) \Psi'(s)\,\ds\right)\z^p(x)\,\dx\nonumber\\
&+\iint_{Q_{R,S}}(1+\Psi)\Psi'^2|D(u-k)_\pm|^p\z^p\,\dx\d\tau\\
\le&\frac12\int_{K_R}\Psi^2(x,t_o-S)\z^p(x)\,\dx\nonumber\\
&+\int_{K_R(x_o)\times\{t_o-S\}}\left(\int_0^{(u-k)_\pm} H_\varep'(k\pm s)\Psi(s) \Psi'(s)\,\ds\right) \z^p(x)\,\dx\\
&+p\iint_{Q_{R,S}}\Psi\Psi'|D(u-k)_\pm|^{p-1}\z^{p-1}|D\z|\,\dx\d\tau.\nonumber
\end{align*}
Since $H'_\varep\ge0$, the second term on the left-hand side can be discarded.
As for the right-hand side, since $\Psi$ is an increasing function of its argument $(u-k)_\pm$, we have
\begin{align*}
\int_0^{(u-k)_\pm} H_\varep'(k\pm s)\Psi(s) \Psi'(s) \,\ds&\le\Big(\sup_{Q_{R,S}}\Psi'\Big)
\int_0^{(u-k)_\pm}H'_\varep(k\pm s)\Psi(s)\,\ds\\
&\le\Big(\sup_{Q_{R,S}}\Psi'\Big)\Big(\sum_{i=0}^{\ell}\nu_i\Big)\Psi\big((u-k)_\pm\big),
\end{align*}
provided $\sum_{i=0}^{\ell}\nu_i$ is finite. If instead it is infinite, 
we 
estimate
\[
\int_0^{(u-k)_\pm}H'_\varep(k\pm s) \Psi(s) \Psi'(s)\,\ds\le\Big(\sup_{Q_{R,S}}\Psi'\Big)\Big(\sup_{-M\le s\le M}|H_\varep(s)|\Big)\Psi\big((u-k)_\pm\big).
\]
Hence, in this case the subsequent estimates will depend also on $M$.

By its definition, $\Psi'\le 1/c$ in $Q_{R,S}$ and therefore,
\[
\int_{K_R(x_o)\times\{t_o-S\}}\left(\int_0^{(u-k)_\pm} H_\varep'(k\pm s)\Psi \Psi' \,\ds\right)\z^p(x)\,\dx\le\frac\gm{c}\int_{K_R(x_o)}\Psi(x,t_o-S)\,\dx,
\]
where $\gm$ depends only on the data if $\sum_{i=0}^{\ell}\nu_i$ is finite, otherwise it depends also on $M$.
Moreover, since $\Psi\geq0$,
an application of Young's inequality yields that
\begin{align*}
p&\iint_{Q_{R,S}} 
\Psi\Psi' \zeta^{p-1}  |D\umenpm|^{p-1} |D\zeta|\,\dx\d\tau\\
&\le\iint_{Q_{R,S}}(1+\Psi)\Psi'^2\zeta^p |D\umenpm|^p\,\dx\d\tau+\frac\gm{(1-\sigma)^p R^p}\iint_{Q_{R,S}}\Psi|\Psi'|^{2-p}
\,\dx\d\tau.
\end{align*}
Collecting all the terms, we conclude the proof.
\end{proof}
\subsection{De Giorgi type lemmas}\label{S:DG}
For  a cylinder $\mathcal{Q}=K \times(T_1,T_2)\subset E_T$,
we introduce the numbers $\mu^{\pm}$ and $\om$ satisfying
\begin{equation*}
	\mu^+\ge\essup_{\mathcal{Q}} u,
	\quad 
	\mu^-\le\essinf_{\mathcal{Q}} u,
	\quad
	\om\ge\mu^+ - \mu^-.
\end{equation*}

We now present the first De Giorgi type lemma that can be shown by using the energy estimates
in \eqref{Eq:energy-bis}; for the detailed proof we refer to \cite[Lemma~2.1]{Liao-Stefan}.
Here we denote the backward cylinder $(x_o,t_o)+Q_{\rho}(\theta):=K_{\rho}(x_o)\times(t_o-\theta\rho^p,t_o)$.
If no confusion arises, we omit the vertex $(x_o,t_o)$ for simplicity.
\begin{lemma}\label{Lm:DG:1}
Let $u$ be a local weak sub(super)-solution to \eqref{Eq:reg} with \eqref{Eq:1:2} in $E_T$. For $\xi\in(0,1)$,
set $\theta=(\xi\om)^{2-p}$.
There exists a  constant $c_o\in(0,1)$ depending only on the data, such that if
\[
\big|\big[\pm(\mu^\pm -u)\le\xi\om\big]\cap Q_{\rho}(\theta) \big|\le c_o(\xi\om)^{\frac{N+p}p}|Q_{\rho}(\theta)|,
\]
then
\[
\pm(\mu^\pm -u)\ge\tfrac12\xi\om\quad\text{ a.e. in }Q_{\frac12\rho}(\theta),
\]
provided the cylinder $Q_{\rho}(\theta)$ is included in $\mathcal{Q}$. 
\end{lemma}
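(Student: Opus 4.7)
The plan is to perform a standard De~Giorgi iteration tailored to the intrinsic scaling $\theta=(\xi\om)^{2-p}$; the only twist compared to the pure parabolic $p$-Laplacian case is the careful bookkeeping of the extra first order term $\gm\iint(u-k)_\pm|\partial_t\zeta^p|\,\dx\dt$ produced by the Stefan correction in \eqref{Eq:energy-bis}. The two sign choices are handled symmetrically, so I focus on the $+$ case for sub-solutions, aiming to prove $u\le\mu^+-\tfrac12\xi\om$ a.e.\ on $Q_{\rho/2}(\theta)$.

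First I would set, for $n\ge 0$,
\[
\rho_n=\tfrac{\rho}{2}+\tfrac{\rho}{2^{n+1}},\qquad k_n=\mu^+-\tfrac{\xi\om}{2}-\tfrac{\xi\om}{2^{n+1}},\qquad Q_n=K_{\rho_n}(x_o)\times(t_o-\theta\rho_n^p,t_o),
\]
so that $k_0=\mu^+-\xi\om$ matches the hypothesis and $k_n\nearrow\mu^+-\tfrac12\xi\om$, and pick cutoffs $\zeta_n$ supported in $Q_n$, identically one on $Q_{n+1}$, with $\zeta_n(\cdot,t_o-\theta\rho_n^p)\equiv 0$, $|D\zeta_n|\le 2^{n+2}/\rho$ and $|\partial_t\zeta_n^p|\le\gm\,2^{np}/(\theta\rho^p)$. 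Writing $w_n:=(u-k_n)_+$ and $A_n:=|\{u>k_n\}\cap Q_n|$, and observing that we may assume $\xi\om\le 1$ (so that $(\xi\om)^{p-1}\ge(\xi\om)^p$; the case $\xi\om\ge 1$ is easier and leads to a stronger hypothesis), I would plug $w_n,\zeta_n$ into \eqref{Eq:energy-bis}. Using $w_n\le\xi\om$ on $\{u>k_n\}$ together with $\theta=(\xi\om)^{2-p}$, the three terms on the right-hand side are controlled by $(\xi\om)^p\,2^{np}\rho^{-p}A_n$, $(\xi\om)^p\,2^{np}\rho^{-p}A_n$ and $(\xi\om)^{p-1}\,2^{np}\rho^{-p}A_n$; the last, arising precisely from the Stefan correction, dominates and yields
\[
\iint_{Q_n}\zeta_n^p|Dw_n|^p\,\dx\dt\;+\;\essup_t\int_{K_{\rho_n}(x_o)}\zeta_n^p w_n^2\,\dx\ \le\ \gm\,(\xi\om)^{p-1}\,\frac{2^{np}}{\rho^p}\,A_n.
\]

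Next I would apply the parabolic Sobolev embedding at exponent $q=p(N+2)/N$ to $\zeta_n w_n$ (using a slight enlargement of the cutoff support, in standard fashion, to match the $L^\infty(L^2)$ norms):
\[
\iint_{Q_{n+1}}\!w_n^{q}\,\dx\dt\ \le\ \iint_{Q_n}(\zeta_n w_n)^{q}\,\dx\dt\ \le\ C\Big(\essup_t\!\!\int(\zeta_n w_n)^2\,\dx\Big)^{\!p/N}\iint|D(\zeta_n w_n)|^p\,\dx\dt,
\]
bound the two right-hand factors by the previous display (the term $w_n^p|D\zeta_n|^p$ from the energy RHS absorbs the Leibniz contribution to $|D(\zeta_n w_n)|^p$), and use $w_n\ge \xi\om/2^{n+2}$ on $\{u>k_{n+1}\}$ to estimate $A_{n+1}$ from above by $\iint w_n^{q}$. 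Collecting the $\xi\om$-powers and normalizing by $|Q_\rho(\theta)|=2^N(\xi\om)^{2-p}\rho^{N+p}$, the $\rho$-exponents cancel exactly and the $\xi\om$-exponents collapse to $-(N+p)/N$, producing the iteration
\[
Y_{n+1}\ \le\ \gm\,b^n\,(\xi\om)^{-\frac{N+p}{N}}\,Y_n^{\frac{N+p}{N}},\qquad Y_n:=\frac{A_n}{|Q_\rho(\theta)|},
\]
for a data-dependent constant $b>1$.

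Finally, the classical fast-geometric-convergence lemma (\cite[Ch.~I, Lemma~4.1]{DB}) guarantees $Y_n\to 0$ provided $Y_0\le\bigl[\gm(\xi\om)^{-(N+p)/N}\bigr]^{-N/p}b^{-N^2/p^2}=c_o(\xi\om)^{(N+p)/p}$ for a sufficiently small $c_o$ depending only on the data, which is exactly the assumption of the lemma. Passing to the limit then yields $|\{u>\mu^+-\tfrac12\xi\om\}\cap Q_{\rho/2}(\theta)|=0$, proving the conclusion. The main obstacle, and the genuine departure from the pure parabolic $p$-Laplacian proof, is the handling of the Stefan correction term: it costs exactly one power of $\xi\om$ in the energy estimate, and this is precisely what produces the anomalous exponent $(N+p)/p$ in the smallness hypothesis — matching it perfectly via the fast convergence lemma is the whole point of the statement.
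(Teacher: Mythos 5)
Your proof follows precisely the approach the paper intends: the paper itself merely states that Lemma~\ref{Lm:DG:1} ``can be shown by using the energy estimates in \eqref{Eq:energy-bis}'' and refers to \cite[Lemma~2.1]{Liao-Stefan} for details, and your De~Giorgi iteration — absorbing the extra Stefan term $\gm\iint(u-k)_\pm|\partial_t\z^p|$ at the cost of one power of $\xi\om$, running the parabolic Sobolev embedding, and matching the resulting exponent $-(N+p)/N$ against the fast-convergence lemma to land exactly on the threshold $c_o(\xi\om)^{(N+p)/p}$ — is exactly that argument, with the bookkeeping carried out correctly. One small inaccuracy: the parenthetical dismissal of $\xi\om\ge1$ is backwards — there the Stefan term no longer dominates, the recurrence becomes $Y_{n+1}\le\gm b^n Y_n^{1+p/N}$ with no $\xi\om$-gain, and $Y_0\le c_o(\xi\om)^{(N+p)/p}$ is then a \emph{weaker} hypothesis than the needed $Y_0\le c_o$; however, since $\xi\om\le 2\|u\|_{\infty,E_T}$ and $\|u\|_\infty$ is part of the data, this is repaired simply by shrinking $c_o$ by the data-dependent factor $(2\|u\|_\infty)^{-(N+p)/p}$, so the gap is cosmetic.
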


The next lemma is a variant of the previous one, involving quantitative initial data.
For this purpose, we will use the forward cylinder at $(x_o,t_1)$:
\begin{equation}\label{Eq:forward-cylinder}
(x_o, t_1)+Q^+_{\rho}(\theta):=K_{\rho}(x_o)\times(t_1,t_1+\theta\rho^p).
\end{equation}
We have the following.
\begin{lemma}\label{Lm:DG:initial:1}
Let $u$ be a  local weak sub(super)-solution to \eqref{Eq:reg} with \eqref{Eq:1:2} in $E_T$. 
Assume that for some $\xi\in(0,1)$ there holds
\[
\pm\big(\mu^\pm -u(\cdot, t_1)\big)\ge \xi\om\quad\text{ a.e. in } K_{\rho}(x_o).
\]
There exists a constant $\gm_o\in(0,1)$ depending only on the data, such that for any $\theta>0$, if 
\[
\big|\big[\pm(\mu^\pm -u)\le\xi\om\big]\cap \big[(x_o, t_1)+Q^+_{\rho}(\theta)\big]\big|\le\frac{ \gm_o(\xi\om)^{2-p}}{\theta}|Q^+_{\rho}(\theta)|,
\]
then
\[
\pm(\mu^\pm -u )\ge\tfrac12\xi\om\quad\text{ a.e. in }K_{\frac12\rho}(x_o)\times(t_1,t_1+\theta\rho^p),
\]
provided the cylinder $(x_o, t_1)+Q^+_{\rho}(\theta)$ is included in $\mathcal{Q}$. 
\end{lemma}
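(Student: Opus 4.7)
The plan is a De~Giorgi iteration tailored to the forward cylinder, taking advantage of the fact that the quantitative initial bound in the hypothesis kills the two initial-time traces on the right-hand side of the time-independent energy estimate \eqref{Eq:energy-ter}. I will sketch the super-solution case; the sub-solution case is symmetric. Fix decreasing truncation levels and shrinking spatial scales
\[
k_n = \mu^- + \tfrac12\xi\om + \tfrac{\xi\om}{2^{n+1}},\quad \rho_n = \tfrac12\rho + \tfrac{\rho}{2^{n+1}},\quad Q_n := K_{\rho_n}(x_o)\times(t_1,t_1+\theta\rho^p),
\]
so $k_n\searrow\mu^-+\tfrac12\xi\om$ and $\rho_n\searrow\tfrac12\rho$. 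Let $\zeta_n=\zeta_n(x)$ be the standard piecewise-smooth cutoff, $\zeta_n\equiv1$ on $K_{\rho_{n+1}}$, vanishing on $\partial K_{\rho_n}$, and satisfying $|D\zeta_n|\le 2^{n+2}/\rho$. Denote $A_n:=\{(u-k_n)_->0\}\cap Q_n$. Since $u(\cdot,t_1)\ge\mu^-+\xi\om\ge k_n$ throughout $K_\rho(x_o)$, the truncation $(u-k_n)_-(\cdot,t_1)\equiv0$ there, which kills both initial-time terms in \eqref{Eq:energy-ter}.

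Using $(u-k_n)_-\le\xi\om$, the estimate \eqref{Eq:energy-ter} then reduces to
\[
\essup_{t_1<t<t_1+\theta\rho^p}\int_{K_{\rho_n}} \zeta_n^p(u-k_n)_-^2\,\dx + \iint_{Q_n}\zeta_n^p|D(u-k_n)_-|^p\,\dx\dt \le \gm\,\frac{2^{pn}}{\rho^p}(\xi\om)^p|A_n|.
\]
Feeding $v_n := (u-k_n)_-\zeta_n$ into parabolic Sobolev (Gagliardo--Nirenberg in space, integrated in time) yields
\[
\iint_{Q_n} v_n^{p(N+2)/N}\,\dx\dt \le \gm\,\Big(\essup_t\int_{K_{\rho_n}} v_n^2\,\dx\Big)^{p/N}\iint_{Q_n}|Dv_n|^p\,\dx\dt,
\]
and on $A_{n+1}\subset\{\zeta_n=1\}$ one has $(u-k_n)_-\ge \xi\om/2^{n+2}$, so the left-hand side dominates $(\xi\om/2^{n+2})^{p(N+2)/N}|A_{n+1}|$. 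Combining these bounds and setting $Y_n:=|A_n|/|Q^+_\rho(\theta)|$, a careful tracking of powers produces a recurrence of the form
\[
Y_{n+1}\le \gm\,b^{n}\,(\xi\om)^{p(p-2)/N}\,\theta^{p/N}\,Y_n^{1+p/N}
\]
for an absolute $b>1$ and a $\gm$ depending only on the data. The standard fast-geometric-convergence lemma then ensures $Y_n\to0$ as soon as $Y_0\le\gm_o(\xi\om)^{2-p}/\theta$ with $\gm_o$ small enough in terms of the data, which is precisely the smallness hypothesis. Vanishing of $|A_\infty|$ on $K_{\rho/2}(x_o)\times(t_1,t_1+\theta\rho^p)$ translates into $u\ge \mu^-+\tfrac12\xi\om$ a.e.\ there.

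The main technical subtlety is the bookkeeping that produces the intrinsic scaling $(\xi\om)^{2-p}/\theta$ in the threshold: the $p/N$ power on the sup-$L^2$ factor from parabolic Sobolev, multiplied against the $(\xi\om)^p$ coming from the energy right-hand side and balanced against the level-gap factor $(\xi\om/2^n)^{p(N+2)/N}$, must combine cleanly to isolate $(\xi\om)^{p(p-2)/N}\theta^{p/N}$ on the iteration rate. For $p=2$ the $\xi\om$ powers cancel and one recovers the classical heat-equation form; for $p>2$ the surplus factor $(\xi\om)^{2-p}$ is the signature of the intrinsic degenerate scaling, and it explains the $1/\theta$ in the smallness hypothesis, as the admissible time height $\theta\rho^p$ of the forward cylinder must be compensated for on the smallness side.
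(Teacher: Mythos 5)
Your proof is correct and follows the same route as the paper's: the paper likewise uses the time-independent energy estimate \eqref{Eq:energy-ter} with the levels $k_n=\mu^-+\tfrac12\xi\om+\xi\om/2^{n+1}$, observes that the initial-time traces vanish thanks to the pointwise hypothesis at $t_1$, and then refers the remaining De~Giorgi iteration to \cite[Chapter~3, \S~4]{DBGV-mono}. You have simply carried out the iteration explicitly, and your bookkeeping of the powers — arriving at $Y_{n+1}\le\gm b^n(\xi\om)^{p(p-2)/N}\theta^{p/N}Y_n^{1+p/N}$ and hence the threshold $Y_0\le\gm_o(\xi\om)^{2-p}/\theta$ — is consistent.
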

\begin{proof}
Let us deal with the case of super-solutions only, as the other case is similar.
We use the energy estimate in \eqref{Eq:energy-ter} in the cylinder $Q_{R,S}\equiv (x_o, t_1)+Q^+_{\rho}(\theta)$, with the 
 levels
\[
k_n=\mu^-+\frac{\xi\om}2+\frac{\xi\om}{2^{n+1}},\quad n=0,1,\cdots.
\] 
Due to this choice of $k_n$ and the assumed pointwise information at $t_1$, the two space integrals at $t_o-S\equiv t_1$ vanish
and the energy estimates reduce to the ones for the parabolic $p$-Laplacian.
As a result, the rest of the proof can be reproduced as in \cite[Chapter~3, \S~4]{DBGV-mono}.
\end{proof}

The next lemma quantifies measure conditions to ensure the degeneracy of the $p$-Laplacian
prevails over the singularity of $\be(\cdot)$. Its proof can be attributed to the theory of parabolic $p$-Laplacian.
Again we omit the vertex of $(x_o,t_o)$ from $Q_{\rho}(\theta)$ for simplicity.
\begin{lemma}\label{Lm:DG:2}
Let $u$ be a  local weak super-solution to \eqref{Eq:reg} with \eqref{Eq:1:2} in $E_T$. 
Assume that for some $\al,\,\eta\in(0,1)$ and $A>1$, there holds
\begin{equation*}
 \big|\big[u(\cdot, t)-\mu^-\ge \eta\om\big]\cap K_\rho\big|>\al |K_\rho|\quad\text{ for all }t\in\big(t_o-A\om^{2-p}\rho^p,t_o\big).
\end{equation*}
 There exists $\xi\in(0,\eta)$, 
such that if $A\ge\xi^{2-p}$ and  
\[
\iint_{Q_{\rho}(\theta)}\int_u^{k} H_\varep'(s)\chi_{[s<k]} \,\d s\dx\dt\le
\xi\om\big|\big[u\le \mu^-+\tfrac12\xi\om\big]\cap Q_{\frac12\rho}(\theta) \big|,
\]
where $k=\mu^-+\xi\om$ and $\theta=(\xi\om)^{2-p}$,
then
\[
u\ge\mu^-+\tfrac14\xi\om\quad\text{ a.e. in } Q_{\frac12\rho}(\theta),
\]
provided the cylinder $Q_{\rho}(A\om^{2-p})$ is included in $\mathcal{Q}$. 
Moreover, it can be traced that $$\xi=\gm(\text{\rm data})\,\eta\exp\big\{-\al^{-\frac{p}{p-1}}\big\}.$$
\end{lemma}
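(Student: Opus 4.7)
}
The plan is to follow the classical expansion-of-positivity framework for the $p$-Laplacian in two phases, inserting the regularized enthalpy $H_\varepsilon$ wherever necessary. In the first phase I use the logarithmic estimate of Proposition~\ref{Lm:log-est} to convert the pointwise-in-time measure hypothesis into a smallness statement for the set $\{u\le\mu^-+\tfrac12\xi\omega\}$ on $Q_{\frac12\rho}(\theta)$. In the second phase I run a De~Giorgi iteration analogous to Lemma~\ref{Lm:DG:1}, using the additional integral hypothesis on $H'_\varepsilon$ in the statement precisely to neutralize the extra $\beta$-term that appears in the energy estimate \eqref{Eq:en-est-gen}.

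For the first phase, I would apply Proposition~\ref{Lm:log-est} with level $k=\mu^-+\eta\omega$, parameter $c=\xi\omega$, truncation $(u-k)_{-}$, and cutoff $\zeta$ supported in $K_\rho(x_o)$ and equal to $1$ on $K_{\frac12\rho}(x_o)$, over the cylinder $K_\rho(x_o)\times(t_1,t_o)$ for a time $t_1$ in the range allowed by the hypothesis. Writing $L:=\ln(\eta/\xi)$, the measure assumption gives $\Psi(\cdot,t_1)=0$ on a set of measure at least $\alpha|K_\rho|$, hence $\int_{K_\rho}\Psi(\cdot,t_1)\,\dx\le(1-\alpha)L|K_\rho|$ and $\int_{K_\rho}\Psi^{2}(\cdot,t_1)\,\dx\le(1-\alpha)L^{2}|K_\rho|$. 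The space-time term contributes at most $\gamma L(\xi\omega)^{p-2}A\omega^{2-p}|K_\rho|=\gamma L A\xi^{p-2}|K_\rho|$, which is of order $\gamma L|K_\rho|$ thanks to the assumption $A\ge\xi^{2-p}$. Since $\Psi\ge c_1L$ on $\{u\le\mu^-+\tfrac12\xi\omega\}$ (for a universal $c_1$, provided $\xi$ is small enough relative to $\eta$), Chebyshev then produces
\[
\bigl|\{u(\cdot,t)\le\mu^-+\tfrac12\xi\omega\}\cap K_{\frac12\rho}(x_o)\bigr|
\;\le\;\frac{\gamma(1-\alpha)}{L}\,|K_\rho|
\qquad\text{for all } t\in(t_1,t_o),
\]
so that integrating in $t$ over the slab of $p$-height $\theta\rho^p=(\xi\omega)^{2-p}\rho^p$ yields the desired smallness on $Q_{\frac12\rho}(\theta)$ as soon as $\gamma(1-\alpha)/L$ is below a threshold $\nu_o=\nu_o(\text{data})$. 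Tracking the balance between the required shrinkage of the bad set and the standard De~Giorgi iteration threshold leads exactly to $L\sim\alpha^{-p/(p-1)}$, i.e.\ $\xi=\gamma\eta\exp\{-\alpha^{-p/(p-1)}\}$.

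For the second phase, I would apply the energy estimate to the decreasing sequence of levels $k_n=\mu^-+\tfrac12\xi\omega+\xi\omega/2^{n+1}$ in the nested cylinders $Q_{\rho_n}(\theta)$ with $\rho_n=\tfrac14\rho+\rho/2^{n+2}$. The $p$-Laplacian portion is standard; the novelty is the term $\iint\bigl(\int_0^{(u-k_n)_{-}}H'_\varepsilon(k_n-s)\,s\,\ds\bigr)|\partial_t\zeta^p|\,\dx\dt$. I bound the inner integral pointwise by $(u-k_n)_{-}\bigl(H_\varepsilon(k_n)-H_\varepsilon(u)\bigr)_+\le\xi\omega\,\int_u^{k_n}H'_\varepsilon(s)\chi_{[s<k_n]}\,\ds$, and then integrate in space-time and use precisely the hypothesis to replace it by $\xi\omega\cdot\xi\omega\,|\{u\le\mu^-+\tfrac12\xi\omega\}\cap Q_{\frac12\rho}(\theta)|$. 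This extra contribution has the same $(\xi\omega)^2$ scaling as the usual $\iint(u-k_n)_{-}^{2}|\partial_t\zeta^p|$ term, so the iteration inequality has the form of the $p$-Laplacian case, and Lemma~\ref{Lm:DG:1}-type iteration closes the argument with $u\ge\mu^-+\tfrac14\xi\omega$ a.e. on $Q_{\frac12\rho}(\theta)$.

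The main obstacle I anticipate is the careful bookkeeping in the logarithmic step: the initial $(\gamma/c)\int\Psi(\cdot,t_1)$ term in Proposition~\ref{Lm:log-est} carries a factor $1/c=1/(\xi\omega)$ coming from $H_\varepsilon$, which is problematic as $\xi\to0$. I expect to resolve this either by choosing $t_1$ in the interval via a Chebyshev argument so that the initial $\beta$-contribution is controlled by its time average, or by iterating the log estimate over a short chain of levels $\eta\omega>\eta_1\omega>\dots>\xi\omega$ with decreasing $c_j$, absorbing the $1/c_j$ factor into the measure reduction gained at each step. This balancing is exactly what produces the exponent $p/(p-1)$ in the stated dependence of $\xi$ on $\alpha$.
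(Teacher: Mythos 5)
Your second phase captures the lemma's central observation: bounding the extra term $\int_0^{(u-k)_-}H'_\varep(k-s)\,s\,\d s$ pointwise by $\xi\om\int_u^kH'_\varep(s)\chi_{[s<k]}\,\d s$ and then using the integral hypothesis to trade its space-time integral for $(\xi\om)^2\bigl|[u\le\mu^-+\tfrac12\xi\om]\cap Q_{\frac12\rho}(\theta)\bigr|$, which in turn is dominated by $4\iint(u-k)_-^2\,\dx\dt$. This is exactly what the paper does. The difference is that the paper does it once at the outset: after this absorption, the energy inequality \eqref{Eq:en-est-gen} at $k=\mu^-+\xi\om$ reduces to the standard parabolic $p$-Laplacian one, and the entire expansion-of-positivity machinery of \cite{DB} (measure shrinking and De~Giorgi iteration together, including the $p$-Laplacian's own clean logarithmic lemma applied after a pointwise intermediate step) is invoked verbatim; the dependence $\xi=\gm\,\eta\exp\{-\al^{-p/(p-1)}\}$ is then traced as in \cite[Appendix~A]{Liao-Stefan}. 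There is no need to re-derive a $\beta$-aware version of every intermediate lemma.

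Your first phase, in contrast, has a structural gap. You apply Proposition~\ref{Lm:log-est} with only the measure-theoretic initial information $\int_{K_\rho}\Psi(\cdot,t_1)\,\dx\le(1-\al)L|K_\rho|$ and $\int_{K_\rho}\Psi^2(\cdot,t_1)\,\dx\le(1-\al)L^2|K_\rho|$. After Chebyshev with $\Psi\ge c_1 L$ on $[u\le\mu^-+\tfrac12\xi\om]$, the first of these terms alone forces $|[\text{bad set}]|\gtrsim(1-\al)|K_\rho|$: the measure cannot be pushed below the fraction $1-\al$ however large $L=\ln(\eta/\xi)$ is chosen, since $(1-\al)L^2/(c_1L)^2$ does not decay in $L$. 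The logarithmic estimate does not \emph{shrink} measure from a mere $\al$-fraction of positivity at the initial time; it only \emph{propagates} near-full-measure positivity once that is available at some time level. The tool that actually shrinks the measure — and that produces the exponent $p/(p-1)$ you correctly anticipate — is a De~Giorgi isoperimetric chain over levels $k_j=\mu^-+\eta\om/2^j$, as in \cite[Chapter~I, Lemma~2.2]{DB} and Lemma~\ref{Lm:shrink:1} of this paper. Two smaller points: your claim that $\gm L A\xi^{p-2}|K_\rho|$ is ``of order $\gm L|K_\rho|$ thanks to $A\ge\xi^{2-p}$'' is reversed ($A\ge\xi^{2-p}$ gives $A\xi^{p-2}\ge1$, so this assumption does not help bound the space-time term); and the $1/c=1/(\xi\om)$ blow-up in the initial-data term that you flag at the end is real — the paper sidesteps it precisely by never invoking the $\beta$-aware logarithmic estimate inside this lemma.
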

\begin{proof} 
Let us turn our attention to the energy estimate \eqref{Eq:en-est-gen} written with $Q_{R,S}=Q_r(\theta)$
for $\frac12\rho\le r\le \rho$, and with $k=\mu^-+\xi\om$.
 The last integral on the right-hand side is estimated by using the given measure theoretical information:
\begin{align*}
\iint_{Q_{r}(\theta)}&\int_u^{k} H_\varep'(s)(k-s)_+ \,\d s |\pl_t\z^p|\,\dx\dt\\
&\le(k-\mu^-)\iint_{Q_{r}(\theta)}\int_u^{k} H_\varep'(s)\chi_{[s<k]} \,\d s |\pl_t\z^p|\,\dx\dt\\
&\le\xi\om  \|\pl_t\z^p\|_{\infty} \iint_{Q_{\rho}(\theta)}\int_u^{k} H_\varep'(s)\chi_{[s<k]} \,\d s \dx\dt\\
&\le (\xi\om)^2  \|\pl_t\z^p\|_{\infty} \big|\big[u\le \mu^-+\tfrac12\xi\om\big]\cap Q_{\frac12 \rho}(\theta)\big|\\
&\le 4 \|\pl_t\z^p\|_{\infty} \iint_{Q_{r}(\theta)}\big[u-(\mu^-+\xi\om)\big]^2_-\,\dx\dt.
\end{align*}
As such it can be combined with an analogous term involving $\pl_t\z^p$ on the right-hand side
of   \eqref{Eq:en-est-gen}.
Consequently, we end up with an energy estimate of $(u-k)_-$, 
departing from which the theory of parabolic $p$-Laplacian in \cite{DB} applies.
Therefore, we may determine a constant $\xi$ by the data and $\al$, such that
\[
u\ge\mu^-+\tfrac14\xi\om\quad\text{ a.e. in }Q_{\frac12\rho}(\theta).
\]
The dependence of $\xi$ can be traced as in \cite[Appendix~A]{Liao-Stefan}. 
\end{proof}
\begin{remark}\upshape
An analogous statement for sub-solutions holds near $\mu^+$.
Since we do not use it in the argument, it is omitted.
\end{remark}
\subsection{Consequence of the logarithmic estimate}
The setting is the same as in \S~\ref{S:DG}, i.e., we introduce the cylinder $\mathcal{Q}\subset E_T$
and define the quantities $\mu^\pm$ and $\om$ connecting the supremum/infimum and the oscillation of $u$ over $ \mathcal{Q}$.
We will use also cylinders of the forward type \eqref{Eq:forward-cylinder}, with vertex at $(x_o,t_1)$.

The following lemma indicates how the measure of sets where $u$ is close to
the supremum/infimum shrinks at each  level of an arbitrarily long time interval,
 once initial pointwise information is given.
\begin{lemma}\label{Lm:log:1}
Let $u$ be a  local weak sub(super)-solution to \eqref{Eq:1:1} with \eqref{Eq:1:2} in $E_T$. For $\xi\in(0,1)$,
set $\theta=(\xi\om)^{2-p}$. Suppose that 
\begin{equation}\label{IC-log-est}
\pm\big(\mu^{\pm}-u(\cdot, t_1)\big)\ge\xi\om\quad\text{ a.e. in } K_{\rho}(x_o).
\end{equation}
Then for any $\al\in(0,1)$ and $A\ge1$, there exists $\bar\xi\in(0,\frac14\xi)$, 
such that
\[
\big|\big[\pm\big(\mu^{\pm}-u(\cdot, t)\big)\le \bar{\xi}\om\big]\cap K_{\frac12\rho}(x_o)\big|\le \al |K_{\frac12\rho}|\quad\text{ for all }t\in(t_1,t_1+A\theta\rho^p),
\]
provided the cylinder $K_{\rho}(x_o)\times(t_1,t_1+A\theta\rho^p)$ is included in $\mathcal{Q}$.
Moreover, the dependence of $\bar\xi$ is given by
\[
\bar\xi=\tfrac12\xi\exp\Big\{-\gm(\text{\rm data})\frac A\al\Big\}.
\]
\end{lemma}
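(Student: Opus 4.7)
The plan is to apply the logarithmic estimate of Proposition~\ref{Lm:log-est} on the forward cylinder $K_\rho(x_o)\times(t_1, t_1+A\theta\rho^p)$ with $\sig=\tfrac12$, and then extract the measure bound from a pointwise lower estimate on $\Psi$ on the set where $u$ is close to $\mu^\pm$. I treat the super-solution $(-)$ case; the sub-solution case is symmetric. Take the level $k=\mu^-+\xi\om$, the truncation parameter $c=\bar\xi\om$ with $\bar\xi\in(0,\tfrac14\xi)$ to be chosen, and majorize $\mathcal{L}=\sup(u-k)_-$ by $\xi\om$, so that
\[
\Psi(x,t)=\ln_+\!\left(\frac{\xi\om}{\xi\om-(u-k)_-+\bar\xi\om}\right).
\]

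By the hypothesis \eqref{IC-log-est}, $u(\cdot,t_1)\ge\mu^-+\xi\om=k$ on $K_\rho(x_o)$, so $(u-k)_-\equiv0$ and hence $\Psi(\cdot,t_1)\equiv0$; the two initial space integrals on the right-hand side of Proposition~\ref{Lm:log-est} drop out. For the remaining space-time integral, I use the elementary bounds $\Psi\le\ln(\xi/\bar\xi)=:L$ on its support and $|\Psi'|\ge 1/((\xi+\bar\xi)\om)\ge 1/(2\xi\om)$, so $|\Psi'|^{2-p}\le(2\xi\om)^{p-2}$ for $p\ge2$. Combined with $\theta=(\xi\om)^{2-p}$ and the time span $A\theta\rho^p$, this yields
\[
\frac{\gm}{\rho^p}\iint \Psi\,|\Psi'|^{2-p}\,\dx\dt
\le \gm L\,(2\xi\om)^{p-2}\,A(\xi\om)^{2-p}\,|K_\rho|
\le \gm\,A\,L\,|K_\rho|,
\]
so that $\sup_{t}\int_{K_{\rho/2}}\Psi^2(\cdot,t)\,\dx\le \gm A\ln(\xi/\bar\xi)\,|K_\rho|$ uniformly for $t\in(t_1,t_1+A\theta\rho^p)$.

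To close the argument, note that on the set $[u(\cdot,t)\le\mu^-+\bar\xi\om]$ one has $(u-k)_-\ge(\xi-\bar\xi)\om$, whence $\xi\om-(u-k)_-+\bar\xi\om\le 2\bar\xi\om$ and therefore $\Psi\ge\ln(\xi/(2\bar\xi))$ there. Inserting this lower bound into the previous display gives, for every $t$ in the prescribed interval,
\[
\bigl|[u(\cdot,t)\le\mu^-+\bar\xi\om]\cap K_{\rho/2}(x_o)\bigr|
\le \frac{\gm\, A\,\ln(\xi/\bar\xi)}{\ln^2(\xi/(2\bar\xi))}\,|K_\rho|,
\]
and choosing $\bar\xi=\tfrac12\xi\exp\{-\gm(\text{data})\,A/\al\}$ (with a possibly larger $\gm$) makes the right-hand side at most $\al|K_{\rho/2}|$, as required. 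The only delicate point is the degenerate factor $|\Psi'|^{2-p}$ in the $p\ge2$ range: the intrinsic scaling $\theta=(\xi\om)^{2-p}$ is chosen precisely so that the resulting $(\xi\om)^{p-2}$ cancels against the factor $\theta$ from the time integration, yielding constants independent of $p$ (stable as $p\downarrow2$), in line with the remark following Theorem~\ref{Thm:1:1}.
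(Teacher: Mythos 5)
Your proposal is correct and follows essentially the same route as the paper's own proof: apply Proposition~\ref{Lm:log-est} on the forward cylinder with $\sigma=\tfrac12$, $c=\bar\xi\omega$, level $k=\mu^\mp\pm\xi\omega$, kill the initial integrals via \eqref{IC-log-est}, bound $\Psi\le\ln(\xi/\bar\xi)$ and $|\Psi'|^{2-p}\le(2\xi\omega)^{p-2}$ so the intrinsic factor $\theta$ cancels, and then use the pointwise lower bound $\Psi\ge\ln(\xi/(2\bar\xi))$ on the bad set. The only cosmetic difference is that you carry $\ln(\xi/\bar\xi)/\ln^2(\xi/(2\bar\xi))$ rather than $1/\ln(\xi/(2\bar\xi))$, which costs only a harmless factor of $2$ since $\bar\xi<\tfrac14\xi$.
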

\begin{proof}
We will prove the estimate with $\mu^+$, since the one with $\mu^-$ is completely analogous. Moreover, for simplicity we omit the dependence on $x_o$.  Proposition~\ref{Lm:log-est} will be used in the cylinder $K_{\rho}(x_o)\times(t_1,t_1+A\theta\rho^p) $.
To this end, let us put
$$k=\mu^+-\xi\om,\qquad\sigma=\tfrac12,\qquad c=\bar\xi\om,$$
with $\bar\xi\in(0,\frac14\xi)$ to be chosen. Due to \eqref{IC-log-est} the integrals on the right-hand side at time $t=t_1$ vanish. Therefore, we are left with 
$$\sup_{t_1\leq t\leq{t_1+A\theta\rho^p}}\int_{K_{\frac12\rho}}\Psi^2(x,t)\,\dx\leq\frac{4\gamma}{\rho^p}
\int_{t_1}^{t_1+A\theta\rho^p}\int_{K_\rho}\Psi|\Psi_u|^{2-p}\,\dx\d\tau.$$
Let us relabel $4\gamma$ as $\gamma$. It is easy to see that 
$$\Psi\leq\ln \frac{\mathcal L}{\bar\xi\om} \le\ln\frac{\xi}{\bar\xi},$$
since $${\mathcal L}=\sup_{\mathcal{Q}}\umenkp\leq\mu^+-\mu^++\xi\om=\xi\om.$$ On the other hand,
\[
|\Psi_u|^{2-p}=\left|{\mathcal L}-(u-k)_++\bar\xi\om\right|^{p-2}\le(2\xi\om)^{p-2}.
\]
Hence, we may estimate
\begin{align*}
\frac\gm{\rho^p}\int_{t_1}^{t_1+A\theta\rho^p}\int_{K_\rho}\Psi|\Psi_u|^{2-p}\,\dx\d\tau
&\le\gm A\theta(2\xi\om)^{p-2}\,|K_\rho| \Big(\ln \frac\xi{\bar\xi}\Big) \le \gm A\,|K_\rho|
\Big(\ln\frac\xi{2\bar\xi} \Big),
\end{align*}
bearing in mind that $\bar\xi\in(0,\frac14\xi)$ and $\theta=(\xi\om)^{2-p}$. If we consider
$$A_{\bar\xi,{\frac12\rho}}(t):=\big[u(\cdot,t)>\mu^+-\bar\xi\om\big] \cap K_{\frac12\rho}$$
as integration set for the integral on the left-hand side, instead of the larger $K_{{\frac12\rho}}$
and note that $\Psi$ is decreasing in $\mathcal{L}$, we may estimate over $A_{\bar\xi,{\frac12\rho}}(t)$:
\[
\Psi\ge\ln\frac{\xi\om}{\xi\om-(\xi\om-\bar\xi\om)+\bar\xi\om}=\ln\frac{\xi}{2\bar\xi}.
\]
%
Then we obtain
$$\Big(\ln\frac\xi{2\bar\xi}\Big)^2|A_{\bar\xi,{\frac12\rho}}(t)|
\leq\widetilde\gamma A \Big(\ln\frac\xi{2\bar\xi}\Big) |K_{\frac12\rho}|\quad\text{ for all } t\in(t_1,{t_1+A\theta\rho^p})$$
with $\widetilde\gamma=2\gm$, that is
$$|A_{\bar\xi,{\frac12\rho}}(t)|\le \frac{ \widetilde\gamma A}{\ln\left(\xi/{2\bar\xi}\right)}
|K_{\frac12\rho}|\quad\text{ for all } t\in(t_1,{t_1+A\theta\rho^p}).$$
If we choose $\bar\xi$ such that
\[
\al\equiv\frac{\widetilde\gamma A}{\ln\left(\xi/2\bar\xi\right)},
\]
we conclude the proof.
%
\end{proof}
%

\section{Proof of Theorem~\ref{Thm:1:1}}
Assume $(x_o,t_o)=(0,0)$, introduce $Q_o=K_\rho\times(-\rho^{p-1},0)$  and
set
\begin{equation*}
	\mu^+=\essup_{Q_o}u,
	\quad
	\mu^-=\essinf_{Q_o}u,
	\quad
	\om \geq \mu^+ - \mu^-.
\end{equation*}
Letting $\theta=(\frac14 \om)^{2-p}$, for some $A(\om)>1$ to be determined in terms of the data and $\om$,
we may assume that
\begin{equation}\label{Eq:start-cylinder}
Q_\rho(A\theta)\subset Q_o=K_\rho\times(-\rho^{p-1},0),\quad\text{ such that }\quad \essosc_{Q_{\rho}(A\theta)}u\le \om;
\end{equation}
the case when the set inclusion does not hold will be incorporated later.

%
%

\subsection{Reduction of oscillation near the supremum}\label{S:1st-alt}
In this section, we work with $u$ as a sub-solution near its supremum. 
Recalling that $\theta=(\frac14 \om)^{2-p}$, suppose for some $\bar{t}\in\big(-(A-1)\theta\rho^p,0\big)$, there holds
\begin{equation}\label{Eq:1st-alt}
	\big|\big[\mu^+ - u\le \tfrac14  \om\big]
	\cap 
	\big[(0,\bar{t})+Q_{\rho}(\theta)\big]\big|\le c_o(\tfrac14 \om)^{\frac{N+p}{p}}|Q_{\rho}(\theta)|=:\al|Q_{\rho}(\theta)|,
\end{equation}
where $c_o$ is the constant determined in Lemma~\ref{Lm:DG:1} in terms of the data.
According to Lemma~\ref{Lm:DG:1} (with $\xi=\frac14$),
we have 
\begin{equation}\label{Eq:pt-est-1}
	\mu^+ - u\ge \tfrac{1}8 \om
	\quad
	\mbox{a.e.~in $(0,\bar{t})+Q_{\frac12 \rho}(\theta)$.}
\end{equation}
This pointwise estimate \eqref{Eq:pt-est-1} at $t_1:=\bar{t}-\theta(\tfrac12\rho)^p$ serves as the initial datum 
for an application of Lemma~\ref{Lm:DG:initial:1} and Lemma~\ref{Lm:log:1}.
First of all, according to Lemma~\ref{Lm:DG:initial:1}, there exists $\gm_o\in(0,1)$, such that
if for some $\eta\in(0,\frac18)$,
\begin{equation}\label{Eq:shrink-1}
\big|\big[\mu^+ - u\le \eta \om \big]\cap \widetilde{Q}  \big|
\le  \frac{\gm_o(\tfrac18\om)^{2-p}}{A\theta}|\widetilde{Q} |
\quad\text{ where }\widetilde{Q} :=K_{\frac12\rho}\times (t_1,0),
\end{equation}
then 
\begin{equation}\label{Eq:pt-est-2}
\mu^+ - u\ge \tfrac{1}{2}\eta \om\quad\text{ a.e. in }K_{\frac14\rho}\times (t_1,0).
\end{equation}
On the other hand, 
owing to \eqref{Eq:pt-est-1},  Lemma~\ref{Lm:log:1} implies that \eqref{Eq:shrink-1} is verified with the choice
  \[
  \eta=\tfrac1{16}\exp\Big\{-\frac{\gm A^2}{2^{p-2}\gm_o}\Big\},
  \]
and hence so is \eqref{Eq:pt-est-2} due to Lemma~\ref{Lm:DG:initial:1}. Note that all constants are stable as $p\downarrow2$.
Consequently, the estimate \eqref{Eq:pt-est-2} yields a reduction of oscillation
\begin{equation}\label{Eq:reduc-osc-0}
\essosc_{Q_{\frac14\rho}(\theta)}u\le\big(1-\tfrac12\eta\big)\om.
\end{equation}
Keep in mind that $A(\om)$ is yet to be chosen.

\subsection{Reduction of oscillation near the infimum: Part~I}\label{S:2nd-alt}
Starting from this section, let us suppose \eqref{Eq:1st-alt} does not hold, that is, for any 
$\bar{t}\in\big(-(A-1)\theta\rho^p,0\big]$,
\begin{equation*}
	\big|\big[\mu^+ - u\le \tfrac14  \om\big]
	\cap 
	\big[(0,\bar{t})+Q_{\rho}(\theta)\big]\big|>\al|Q_{\rho}(\theta)|\quad\text{ for }\al=c_o(\tfrac14 \om)^{\frac{N+p}{p}}.
\end{equation*}
Since $\mu^+-\frac14\om\ge\mu^-+\frac14\om$  may be assumed without loss of generality, we rephrase it as
\begin{equation}\label{Eq:2nd-alt}
\big|\big[u-\mu^-\ge\tfrac14\om\big]\cap \big[(0,\bar{t})+Q_{\rho}(\theta)\big] \big|> \al|Q_{\rho}(\theta)|.
\end{equation}

Fixing such $\bar{t}$ for the moment, we will analyze the local clustering phenomenon of $u$
encoded in the measure information \eqref{Eq:2nd-alt}. The idea of the following argument is
taken from  \cite[\S\S~5 -- 8]{DBV-95}. We will work with $u$ as a super-solution near its infimum
throughout \S\S~\ref{S:2nd-alt} -- \ref{S:reduc-osc-4}.

\begin{lemma}\label{Lm:cluster}
For every $\lm\in(0,1)$ and $\eta\in(0,1)$, there exist a point $(x_*,t_*)\in (0,\bar{t})+Q_{\rho}(\theta)$,
a number $\kappa\in(0,1)$ and a cylinder $(x_*,t_*)+Q_{\kappa\rho}(\theta)\subset (0,\bar{t})+Q_{\rho}(\theta)$, such that
\[
\big|\big[u\le\mu^-+\tfrac14\lm\om\big]\cap \big[(x_*,t_*)+Q_{\kappa\rho}(\theta)\big]\big| \le \eta |Q_{\kappa\rho}(\theta)|.
\]
The dependence of $\kappa$ is traced by $\kappa=\gm(\text{\rm data}) (1-\lm)^{3+\frac2p}\eta^{1+\frac1p}\al^{2+\frac1p}\om^{1+\frac1p}$.
\end{lemma}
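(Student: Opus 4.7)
My plan is to argue by contradiction via the local clustering strategy of DiBenedetto--Vespri, combining the energy estimate \eqref{Eq:energy-bis} with a Sobolev--Poincar\'e inequality. Tile $(0,\bar{t})+Q_\rho(\theta)$ by pairwise disjoint sub-cylinders $\{Q_j\}_{j\in J}$ of the form $(x_j,t_j)+Q_{\kappa\rho}(\theta)$, with $\kappa\in(0,1)$ to be chosen, so $|J|\asymp\kappa^{-(N+p)}$. Assume for contradiction that the conclusion fails on every $Q_j$, i.e.\ $|[u\le\mu^-+\tfrac14\lm\om]\cap Q_j|>\eta|Q_j|$ for all $j\in J$. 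Introduce the levels $k_1:=\mu^-+\tfrac14\lm\om$, $k_2:=\mu^-+\tfrac14\om$, and set $v:=(u-k_1)_+$; then $v\equiv 0$ on a set of density $>\eta$ inside each $Q_j$, while $v\ge\tfrac14(1-\lm)\om$ on the hypothesized set $[u\ge k_2]$ coming from \eqref{Eq:2nd-alt}.

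Next I feed two analytic inputs into this dichotomy. Caccioppoli: apply \eqref{Eq:energy-bis} to $(u-k_1)_+$ on a slight enlargement of $Q_\rho(\theta)$, using $v\le\om$ and $\theta=(\tfrac14\om)^{2-p}$ to absorb the linear-in-$v$ contribution of $H_\varepsilon$, giving global $L^\infty_tL^2_x$ and $L^p_tW^{1,p}_x$ bounds on $v$ of natural order $\om$. Sobolev--Poincar\'e: on each $Q_j$, the density-$\eta$ zero-set of $v$ enables, via a Fubini-in-time argument on the time slices where the spatial zero-set retains a comparable density, a Gagliardo--Nirenberg-type estimate of the schematic form
\[
\iint_{Q_j}v^{q}\,\dx\dt\le\gm\,\eta^{-a}(\kappa\rho)^{p}\iint_{Q_j}|Dv|^{p}\,\dx\dt\,\Bigl(\sup_{t}\!\int_{K_{\kappa\rho}}v^{2}\,\dx\Bigr)^{b},
\]
for some $q>p$ and $a,b\ge0$ depending only on $N,p$. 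Summing over $j\in J$ and inserting the global Caccioppoli bound then produces
\[
\iint_{Q_\rho(\theta)}v^{q}\,\dx\dt\le\gm\,\eta^{-a}\kappa^{p-(N+p)}\om^{q}\,|Q_\rho(\theta)|.
\]

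On the other hand, Chebyshev combined with the level gap $k_2-k_1=\tfrac14(1-\lm)\om$ gives $\bigl[\tfrac14(1-\lm)\om\bigr]^{q}\bigl|[u\ge k_2]\cap Q_\rho(\theta)\bigr|\le\iint_{Q_\rho(\theta)}v^{q}\,\dx\dt$, whereas \eqref{Eq:2nd-alt} forces $|[u\ge k_2]\cap Q_\rho(\theta)|>\al|Q_\rho(\theta)|$. Choosing $\kappa$ so small as to violate this chain of inequalities yields the desired contradiction; unwinding the exponents---balancing the powers of $(1-\lm)$, $\eta$, $\om$ and $\al$ against the spare factor $\kappa^{p-(N+p)}$---pins down the claimed dependence $\kappa=\gm(\text{data})(1-\lm)^{3+2/p}\eta^{1+1/p}\al^{2+1/p}\om^{1+1/p}$. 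The main obstacle is precisely this exponent bookkeeping: simultaneously aligning the Sobolev--Poincar\'e exponent $q$, the Poincar\'e constant $\eta^{-a}$ arising from the large zero-set, the parabolic scaling $\theta=(\tfrac14\om)^{2-p}$, and the $H_\varepsilon$-contribution in \eqref{Eq:energy-bis}, so that a naive choice does not suffice and only a careful optimization recovers the stated power law.
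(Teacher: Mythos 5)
The core analytic step of your scheme---the claimed parabolic Sobolev--Poincar\'e inequality
\[
\iint_{Q_j}v^{q}\,\dx\dt\le\gm\,\eta^{-a}(\kappa\rho)^{p}\iint_{Q_j}|Dv|^{p}\,\dx\dt\,\Bigl(\sup_{t}\!\int_{K_{\kappa\rho}}v^{2}\,\dx\Bigr)^{b}
\]
driven only by the space--time measure density of the zero set of $v$---does not hold at the claimed generality, and the Fubini-in-time device you invoke does not repair it. The obstruction is that $\big|[v=0]\cap Q_j\big|>\eta|Q_j|$ controls a zero set of spatial density $\gtrsim\eta$ only on a sub-collection of time slices whose total length is $\gtrsim\eta$ times that of $Q_j$; on the remaining slices $v$ may equal $\om$ everywhere, with zero \emph{spatial} gradient, which is perfectly compatible with the Caccioppoli bounds $\iint|Dv|^p\lesssim\om^p\rho^{-p}|Q_j|$ and $\sup_t\int v^2\lesssim\om^2|K_{\kappa\rho}|$ (take $v\equiv0$ for early times and $v\equiv\om$ afterwards). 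For such a $v$ the right-hand side of your inequality is forced to zero while the left is $\asymp(1-\eta)\om^q|Q_j|$, so the estimate fails and the contradiction cannot be launched. In short, a space--time zero-density hypothesis cannot give Poincar\'e control unless one additionally controls the time evolution, which requires using the parabolic structure beyond the Caccioppoli inequality.

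The paper avoids this pitfall by not arguing by contradiction at all. It first transfers \eqref{Eq:2nd-alt} into a \emph{single} time slice $\widetilde t$ on which both the spatial gradient $\int_{K_\rho\times\{\widetilde t\}}|Dv|^p$ is controlled (via a mean-value-over-time argument) and the one-slice measure $|[v(\cdot,\widetilde t)\ge1]\cap K_\rho|\ge\tfrac12\al|K_\rho|$ is large; then it applies the elliptic De Giorgi clustering lemma \cite[Chapter~2, Lemma~3.1]{DBGV-mono} at that slice to locate a small cube $K_{\varep\rho}(\widetilde x)$ where $v(\cdot,\widetilde t)$ is close to $1$; and finally it propagates this information \emph{forward} in time using the energy estimate with time-independent cutoff \eqref{Eq:energy-ter}, which does give one-sided time control. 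The stated power law $\kappa=\gm(1-\lm)^{3+2/p}\eta^{1+1/p}\al^{2+1/p}\om^{1+1/p}$ emerges precisely from multiplying the slice-clustering radius $\varep\sim(1-\lm)\widetilde\eta\,\al^{2+1/p}\om^{1/p}$ by the forward-propagation time scale $\dl^{1/p}\sim[(1-\lm)^2\eta]^{1/p}$, a structure your contradiction scheme would not reproduce even if its Sobolev step could be salvaged.
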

\begin{proof}
For simplicity of notation, we take $\bar{t}=0$. 
 Let $\z$ be a standard cutoff function in $Q_{2\rho}(\theta)$ that vanishes on its parabolic boundary
and equals the identity in $Q_{\rho}(\theta)$, satisfying $|D\z|\le \gm/\rho$ and $|\pl_t \z|\le \gm/(\theta\rho^p) $.
According to the energy estimate \eqref{Eq:energy-bis} written in $Q_{2\rho}(\theta)$ for $(u-k)_-$
with $k=\mu^-+\frac14\om$ and with the cutoff function $\z$, a simple calculation yields
\[
\iint_{Q_{\rho}(\theta)}|D(u-k)_-|^p\,\dx\dt\le\frac{\gm}{\rho^p}\om^p\Big(1+\frac1{\om}\Big) |Q_{\rho}(\theta)|.
\]
In terms of $$v:=\frac{\tfrac14\om-(u-k)_-}{\tfrac14\om},$$
 this energy estimate may be written as
\begin{equation}\label{Eq:energy-v}
\iint_{Q_{\rho}(\theta)}|Dv|^p\,\dx\dt\le\frac{\gm}{\rho^p}\Big(1+\frac1{\om}\Big) |Q_{\rho}(\theta)|,
\end{equation}
and meanwhile the measure information \eqref{Eq:2nd-alt} reads
\begin{equation}\label{Eq:2nd-alt-v}
\big|\big[v\ge1\big]\cap Q_{\rho}(\theta)\big|> \al|Q_{\rho}(\theta)|.
\end{equation}

To proceed, let us define the set
\[
A(t):=\big[v(\cdot, t)\ge 1\big]\cap K_\rho,
\]
and the set
\[
\mathcal{P}:=\big\{t\in(-\theta\rho^p,0): |A(t)|\ge\tfrac12\al |K_\rho|\big\}.
\]
Now we may estimate by using \eqref{Eq:2nd-alt-v}:
\begin{align*}
\al |Q_{\rho}(\theta)|\le\int_{-\theta\rho^p}^{0}|A(t)|\,\dt &=\int_{\mathcal{P}}|A(t)|\,\dt + \int_{\mathcal{P}^c}|A(t)|\,\dt
\le \big(|\mathcal{P}|+\tfrac12\al\theta\rho^p\big)|K_\rho|,
\end{align*}
which implies $|\mathcal{P}|\ge\tfrac12\al\theta\rho^p$. This joint with \eqref{Eq:energy-v} yields that
\[
\tfrac12\al\theta\rho^p\inf_{t\in\mathcal{P}}\int_{K_\rho \times\{t\}}|Dv|^p\,\dx\le \iint_{Q_\rho(\theta)}|Dv|^p\,\dx\dt\le
\frac{\gm}{\rho^p} \Big(1+\frac1{\om}\Big) |Q_{\rho}(\theta)|,
\]
that is,
\[
\inf_{t\in\mathcal{P}}\int_{K_\rho \times\{t\}}|Dv|^p\,\dx\le \frac{2\gm}{\al\rho^p}\Big(1+\frac1{\om}\Big)|K_\rho|.
\]
Therefore, there exists $\widetilde{t}\in\mathcal{P}$, such that
\begin{equation}\label{Eq:cluster-condi-1}
\int_{K_\rho\times\{\widetilde{t}\}}|Dv|^p\,\dx\le\frac{2\gm}{\al\rho^p}\Big(1+\frac1{\om}\Big)|K_\rho|.
\end{equation}
Meanwhile, by the definition of $\mathcal{P}$, there holds that
\begin{equation}\label{Eq:cluster-condi-2}
\big|\big[v(\cdot, \widetilde{t})\ge 1\big]\cap K_\rho\big|\ge\tfrac12\al |K_\rho|.
\end{equation}
Based on \eqref{Eq:cluster-condi-1} and \eqref{Eq:cluster-condi-2}, 
we are ready to apply \cite[Chapter~2, Lemma~3.1]{DBGV-mono}.
Indeed, let $\widetilde\lm:=\tfrac12(1+\lm)$ and $\widetilde\eta\in(0,1)$ to be determined:
there exist $\widetilde{x}\in K_\rho$
and 
\begin{equation}\label{Eq:choice-eps}
\varep=\gm(\text{data})(1-\widetilde\lm)\widetilde\eta\al^{2+\frac1p}\om^{\frac1p},
\end{equation} 
such that
\[
\big|\big[v(\cdot, \widetilde{t})\ge\widetilde{\lm}\big]\cap K_{\varep\rho}(\widetilde{x})\big|\ge(1-\widetilde{\eta}) |K_{\varep\rho}|.
\]
Reverting to $u$, we actually obtain
\begin{equation}\label{Eq:measure-u}
\big|\big[u(\cdot, \widetilde{t})\le\mu^-+\tfrac14\widetilde{\lm}\om\big]\cap K_{\varep\rho}(\widetilde{x})\big|
\le\widetilde{\eta} |K_{\varep\rho}|.
\end{equation}

In order to propagate this measure information, we consider the forward cylinders
\[
\left\{
\begin{aligned}
& K_{\frac12\varep\rho}(\widetilde{x})\times\big(\widetilde{t},\widetilde{t}+\dl\theta(\varep\rho)^p\big),\\
& K_{ \varep\rho}(\widetilde{x})\times\big(\widetilde{t},\widetilde{t}+\dl\theta(\varep\rho)^p\big),
\end{aligned}
\right.
\]
where $\dl>0$ is to be determined. Let $\z(x)$ be a cutoff function in $K_{ \varep\rho}(\widetilde{x})$
that vanishes on $\pl K_{ \varep\rho}(\widetilde{x})$ and equals the identity in $K_{ \frac12\varep\rho}(\widetilde{x})$,
such that $|D\z|\le\gm/ (\varep\rho)$. Employing \eqref{Eq:measure-u}, the energy estimate \eqref{Eq:energy-ter} for $(u-k)_-$
with $k=\mu^-+\tfrac14\widetilde{\lm}\om$ in this setting gives that
\[
\int_{K_{ \frac12\varep\rho}(\widetilde{x})\times\{t\}}(u-k)_-^2\,\dx\le\gm\om^2\Big(\dl+\frac{2\widetilde\eta}{\om}\Big) 
|K_{ \frac12\varep\rho}|,
\]
for all $t\in\big(\widetilde{t},\widetilde{t}+\dl\theta( \varep\rho)^p\big)$.

We estimate the integral on the left-hand side from below by
\begin{align*}
\int_{K_{ \frac12\varep\rho}(\widetilde{x})\times\{t\}}(u-k)_-^2\,\dx
&\ge\int_{K_{ \frac12\varep\rho}(\widetilde{x})\times\{t\}}(u-k)_-^2\chi_{[u<\mu^-+\frac14\lm\om]}\,\dx\\
&\ge\tfrac1{64}(1-\lm)^2\om^2 \big|\big[u(\cdot, t)<\mu^-+\tfrac14\lm\om\big]\cap K_{ \frac12\varep\rho}(\widetilde{x})\big|.
\end{align*}
Substituting this estimate back to the energy estimate yields that
\[
\big|\big[u(\cdot, t)<\mu^-+\tfrac14\lm\om\big]\cap K_{ \frac12\varep\rho}(\widetilde{x})\big|\le
\frac{\gm}{(1-\lm)^2}\Big(\dl+\frac{2\widetilde\eta}{\om}\Big) |K_{ \frac12\varep\rho}|.
\]
Now we may choose $\dl$ and $\widetilde\eta$ to satisfy
\begin{equation}\label{Eq:choice-dl}
\frac{\gm}{(1-\lm)^2}\Big(\dl+\frac{2\widetilde\eta}{\om}\Big)\le\eta.
\end{equation}
Up to now, we have shown that
\[
\big|\big[u(\cdot, t)\ge\mu^-+\tfrac14\lm\om\big]\cap K_{ \frac12\varep\rho}(\widetilde{x})\big|\ge
(1-\eta) |K_{ \frac12\varep\rho}|,
\]
for all $t\in\big(\widetilde{t},\widetilde{t}+\dl\theta(\varep\rho)^p\big)$.
For simplicity let us denote $r:=\frac12\varep\rho$. The above slicewise measure information actually yields 
\begin{equation}\label{Eq:cluster-1}
\big|\big[u \ge\mu^-+\tfrac14\lm\om\big]\cap Q \big|\ge
(1-\eta) |Q|,\quad\text{ where } Q:=K_{r}(\widetilde{x})\times\big(\widetilde{t},\widetilde{t}+\dl\theta(\varep\rho)^p\big).
\end{equation}
Arranging $L:=\tfrac12\dl^{-\frac1p}$ to be an integer, we partition $Q$ along the space coordinate planes
 into $L^N$ disjoint but adjacent sub-cylinders, each of which is congruent to
 \[
 K_{\frac{r}{L}}\times\big(-\dl\theta(\varep\rho)^p,0\big)\equiv K_{\kappa \rho}\times\big(-\theta(\kappa\rho)^p,0\big)=Q_{\kappa\rho}(\theta)
 \]
 where $\kappa:=\varep\dl^{\frac1p}$ can be traced by combining \eqref{Eq:choice-eps} and  \eqref{Eq:choice-dl}, i.e.
 \[
\kappa=\gm(\text{data}) (1-\lm)^{3+\frac2p} \eta^{1+\frac1p} \al^{2+\frac1p}\om^{1+\frac1p}.
 \]
 Due to \eqref{Eq:cluster-1}, it is easy to see that at least one of them, say $(x_*,t_*)+Q_{\kappa\rho}(\theta)$, will satisfy the desired property
 \[
 \big|\big[u \ge\mu^-+\tfrac14\lm\om\big]\cap \big[(x_*,t_*)+Q_{\kappa\rho}(\theta) \big]\big|\ge(1-\eta)| Q_{\kappa\rho}(\theta)|.
 \]
The proof is concluded with such a choice of $(x_*,t_*)$ and  $\kappa$.
\end{proof}

The location of the  clustering within $(x_*,t_*)+Q_{\kappa\rho}(\theta)\subset(0,\bar{t})+Q_{\rho}(\theta)$ 
being only qualitative notwithstanding, the quantified measure concentration
allows us to extract pointwise estimate with the aid of Lemma~\ref{Lm:DG:1} and then use
the logarithmic energy estimate to propagate the measure information up to the level $\bar{t}$, cf.~Figure~\ref{Fig-2}.

\begin{figure}[t]\label{Fig-2}
\centering
\includegraphics[scale=1]{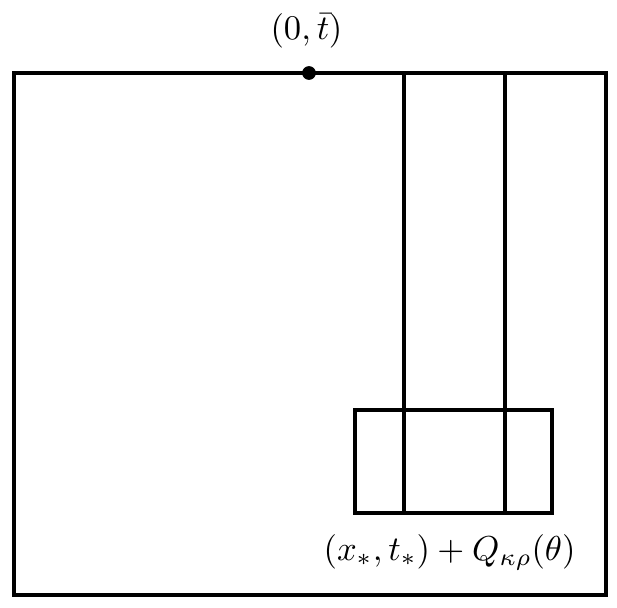}
\caption{Local clustering}
\end{figure}

As a matter of fact, if in Lemma~\ref{Lm:cluster}  we choose $\lm=\tfrac12$ and 
$\eta=c_o(\tfrac14\om)^{\frac{N+p}p}$ where $c_o$ is determined in Lemma~\ref{Lm:DG:1},
then Lemma~\ref{Lm:cluster} and Lemma~\ref{Lm:DG:1} would yield that
\[
u\ge\mu^-+\tfrac1{16}\om\quad\text{ a.e. in }(x_*,t_*)+Q_{\frac12\kappa\rho}(\theta),
\]
for some $(x_*,t_*)\in (0,\bar{t})+Q_{\rho}(\theta)$ and the constant 
\[
\kappa=\gm(\text{data})\om^{\bar{q}}\quad\text{ where }\bar{q}:=\big(\tfrac{N}p+1\big)\big(3+\tfrac2p\big)+\tfrac1p+1.
\]
In particular, we have 
\[
u\big(\cdot, t_*-\theta(\tfrac12\kappa\rho)^p\big)\ge\mu^-+\tfrac1{16}\om \quad\text{ a.e. in }K_{\frac12\kappa\rho}(x_*),
\]
which serves as the initial datum to apply Lemma~\ref{Lm:log:1}.
Indeed, setting $\al=\tfrac12$ and $\xi=\tfrac1{16}$ in Lemma~\ref{Lm:log:1}
and choosing $\widetilde{A}$ so large that
\[
(\tfrac14\om)^{2-p}\rho^p\le \widetilde{A}(\tfrac1{16}\om)^{2-p}(\tfrac12\kappa\rho)^p,\quad\text{ i.e. }\quad \widetilde{A}\ge\frac{2^{4-p}}{\kappa^{p}},
\]
 it yields a number $\bar\xi\in(0,\tfrac1{4}\xi)$, such that
\begin{equation}\label{Eq:measure-bar}
\big|\big[u(\cdot, \bar{t})>\mu^-+\bar\xi\om\big]\cap K_{\frac14\kappa\rho}(x_*) \big|>\tfrac12 |K_{\frac14\kappa\rho}|.
\end{equation}
The dependence of $\bar\xi$ is traced by
\begin{equation}\label{Eq:choice-xi-bar}
\bar\xi=\tfrac12\xi\exp\Big\{- \frac{\gm\widetilde{A}}\al\Big\}=\tfrac1{32}\exp\Big\{-\frac{\gm2^{4-p}}{\kappa^p} \Big\}=\tfrac1{32}\exp\Big\{ -\frac{\gm}{\om^{p\bar{q}}} \Big\}.
\end{equation}
The measure information \eqref{Eq:measure-bar} permits us to claim that
\begin{align*}
\big|\big[u(\cdot, \bar{t})>\mu^-+\bar\xi\om\big]\cap K_{ \rho}  \big|&\ge
\big|\big[u(\cdot, \bar{t})>\mu^-+\bar\xi\om\big]\cap K_{\frac14\kappa\rho}(x_*) \big|\\
&>\tfrac12 |K_{\frac14\kappa\rho}|=\tfrac12(\tfrac14\kappa)^N |K_\rho|=:\bar\al |K_\rho|.
\end{align*}
Thanks to the arbitrariness of $\bar{t}$, we have actually arrived at
\begin{equation}\label{Eq:measure:0}
 \big|\big[u(\cdot, t)\ge\mu^- + \bar\xi\om\big]\cap K_\rho\big|>\bar\al |K_\rho|\quad\text{ for all }t\in\big(-(A-1) \theta\rho^p,0\big].
\end{equation}
The dependence of $\bar\al$ is traced by 
\begin{equation}\label{Eq:choice-beta}
\bar\al=\gm(\text{data})\,\om^{\bar{q}N}.
\end{equation}

This measure information \eqref{Eq:measure:0} lays the foundation for the analysis to be set out in the following sections.
Since $A$ is a large number, we will stipulate that \eqref{Eq:measure:0} holds with $A-1$ replaced by $A$ for simplicity.
\subsection{Reduction of oscillation near the infimum: Part~II}\label{S:reduc-osc-1}
Let us first introduce the following intrinsic cylinders 
\begin{equation*}
\left\{
\begin{array}{ll}
Q_{\rho}(\widehat\theta)=K_\rho \times( -  \widehat\theta\rho^p,0),\quad \widehat\theta=(\xi\om)^{2-p},\\[5pt]
Q_{\rho}(\widetilde\theta)=K_\rho \times( -\widetilde\theta\rho^p,0),\quad \widetilde\theta=(\dl\xi\om)^{1-p},
\end{array}\right.
\end{equation*}
for some $\xi(\om)$ and $\dl(\om)$ in $(0,1)$ to be determined later.
We can always assume $\xi$ and $\dl$ to be sufficiently small, so that $ \widehat\theta\le\widetilde{\theta}$. 
On the other hand, we may assume that 
\begin{equation}\label{Eq:A-control}
8^p\widetilde{\theta}\le A  \theta
\end{equation}
 for some $A(\om)$ yet to be determined.

Throughout \S\S~\ref{S:reduc-osc-1} -- \ref{S:reduc-osc-3}, 
we always assume that
\begin{equation}\label{Eq:mu-minus}
|\mu^- - e_i|\le\tfrac14\dl\xi\om \quad\text{ for some  } i\in\{0,1,\cdots,\ell\},
\end{equation} 
for  the same $\xi(\om)$ and $\dl(\om)$ in $(0,1)$ introduced above, to be determined.
When restriction \eqref{Eq:mu-minus} does not hold, the case will be examined in \S~\ref{S:reduc-osc-4}.

First of all, we turn our attention to
Lemma~\ref{Lm:DG:1} and Lemma~\ref{Lm:DG:2}. 
In view of the measure information \eqref{Eq:measure:0}, Lemma~\ref{Lm:DG:2} is at our disposal, with $\al$, $\eta$ 
and $A$ replaced by $\bar\al$, $\bar\xi$ and $A/4^{2-p}$ respectively. Suppose $\xi$ is determined in Lemma~\ref{Lm:DG:2} in terms of the data and $\bar\al$ fixed in \eqref{Eq:choice-beta},
and recall that $ \widehat\theta=(\xi\om)^{2-p}$.
If there holds
\[
\big|\big[u\le\mu^-+\tfrac12\xi\om\big]\cap Q_{\frac12\rho}( \widehat\theta)\big|\le c_o(\xi\om)^{\frac{N+p}p}|Q_{\frac12\rho}( \widehat\theta)|,
\]
then  Lemma~\ref{Lm:DG:1} yields that
\begin{equation}\label{Eq:lower-bd-1}
u\ge \mu^-+\tfrac14\xi\om\quad\text{ a.e. in }Q_{\frac14\rho}( \widehat\theta).
\end{equation}
Analogously, if for $k=\mu^-+ \xi\om$, there holds
\begin{equation*}
\iint_{Q_{\rho}( \widehat\theta)}\int_u^{k} H_\varep'(s) \chi_{[s<k]}\,\d s\dx\dt\le\xi\om\big|\big[u\le \mu^-+\tfrac12\xi\om\big]\cap Q_{\frac12\rho}( \widehat\theta)\big|
\end{equation*}
then Lemma~\ref{Lm:DG:2} yields that, stipulating $A\ge 4^{2-p}\xi^{2-p}$, 
\begin{equation}\label{Eq:lower-bd-2}
u\ge \mu^-+\tfrac14\xi\om\quad\text{ a.e. in }Q_{\frac12\rho}( \widehat\theta).
\end{equation}
Consequently, either \eqref{Eq:lower-bd-1} or \eqref{Eq:lower-bd-2} yields a reduction of oscillation
\begin{equation}\label{Eq:reduc-osc-1}
\essosc_{Q_{\frac14\rho}( \widehat\theta)}u\le(1-\tfrac14\xi)\om.
\end{equation}
For later use, we record the dependence of $\xi$ here, that is,
\begin{equation}\label{Eq:choice-xi}
\xi= \exp\Big\{- \gm(\text{data})\om^{-\frac{p\bar{q}N}{p-1}}-\gm(\text{data})\om^{-p\bar{q}}\Big\}.
\end{equation}
\color{red}
\color{black}
\subsection{Reduction of oscillation near the infimum: Part~III}\label{S:reduc-osc-2}
In this section, we continue to examine the situation when the measure condition in Lemma~\ref{Lm:DG:1} is violated:
\begin{equation}\label{Eq:opposite:1}
\big|\big[u\le\mu^-+\tfrac12\xi\om\big]\cap Q_{\frac12\rho}( \widehat\theta)\big|> c_o(\xi\om)^{\frac{N+p}p}|Q_{\frac12\rho}( \widehat\theta)|,
\end{equation}
and when the condition in Lemma~\ref{Lm:DG:2} is also violated: for $k=\mu^-+ \xi\om$, 
there holds
\begin{equation}\label{Eq:opposite:2}
 \iint_{Q_{\rho}( \widehat\theta)}\int_u^{k} H_\varep'(s)\chi_{[s<k]} \,\d s\dx\dt>\xi\om\big|\big[u\le \mu^-+\tfrac12\xi\om\big]\cap Q_{\frac12\rho}(\widehat\theta)\big|.
\end{equation}

Combining \eqref{Eq:opposite:1} and \eqref{Eq:opposite:2}, we  obtain that, for all $r\in[2\rho, 8\rho]$,
\begin{equation}\label{Eq:opposite:3}
\begin{aligned}
\iint_{Q_{r}(\widehat\theta)}\int_u^{k} H_\varep'(s)\chi_{[s<k]} \,\d s\dx\dt&\ge\iint_{Q_{\rho}(\widehat\theta)}\int_u^{k} H_\varep'(s)\chi_{[s<k]} \,\d s\dx\dt\\
&\ge\xi\om\big|\big[u\le \mu^-+\tfrac12\xi\om\big]\cap Q_{\frac12\rho}(\widehat\theta)\big|\\
&\ge c_o(\xi\om)^b|Q_{\frac12\rho}(\widehat\theta)|\ge\widetilde{\gm}(\xi\om)^b|Q_r(\widehat\theta)|,
\end{aligned}
\end{equation}
where $\widetilde{\gm}=c_o16^{-N-p}$ and
$b=1+\tfrac{N+p}p$.

Next, introduce a free parameter $\bar\dl\in(\dl,2\dl)$ and set $\bar\theta:=(\bar\dl\xi\om)^{1-p}$.
Recall also that $ \theta=(\tfrac14\om)^{2-p}$, $\widetilde{\theta}=(\dl\xi\om)^{1-p}$, $\widehat\theta=(\xi\om)^{2-p}$,
and  that we have assumed $\widetilde{\theta}(8\rho)^p\le A \theta\rho^p\le\rho^{p-1}$ in \eqref{Eq:A-control}. 
Therefore, 
\[
Q_r( \widehat\theta)\subset Q_r(\bar{\theta})\subset Q_r(\widetilde{\theta})\subset Q_{r}(A \theta)\subset \widetilde{Q}_o
\quad\text{ for any } r\in[2\rho, 8\rho].
\]
The estimate \eqref{Eq:opposite:3} implies that there exists $t_*\in [-\widehat\theta r^p,0]$,
such that
\begin{equation}\label{Eq:time:1}
\int_{K_{r}\times\{t_*\}}\int_u^{k} H_\varep'(s)\chi_{[s<k]} \,\d s\dx\ge\widetilde{\gm}(\xi\om)^b|K_r|.
\end{equation}
 Observe also that for any $t\in[-\bar{\theta} r^p, 0]$ and any  $\bar\dl\in(\dl,2\dl)$, there holds
 \begin{equation}\label{Eq:time:2}
\begin{aligned}
|K_r|&\ge\big|\big[u\le\mu^-+\bar\dl\xi\om\big]\cap K_r\big|\\
&\ge(\bar\dl\xi\om)^{-2}\int_{K_r\times\{t\}}\big[u-(\mu^-+\bar\dl\xi\om)\big]_-^2\,\dx.
\end{aligned}
\end{equation}
Denoting $\bar{k}=\mu^-+ \bar\dl\xi\om$ and  enforcing that for some $i\in\{0,1,\cdots,\ell\}$,
\begin{equation*}
|\mu^- - e_i|\le\tfrac14\dl\xi\om\quad\text{ and }\quad \varep\le\tfrac14\dl\xi\om,
\end{equation*}
we use \eqref{Eq:time:1} and \eqref{Eq:time:2} to estimate
\begin{equation*}
\begin{aligned}
&\essup_{-\bar{\theta}r^p<t<0}\int_{K_{r}}\int_u^{\bar{k}} H_\varep'(s)(s-\bar{k})_-\,\d s \dx\\
&\qquad\ge(\bar{k}-e_i- \varep )\essup_{-\bar{\theta}r^p<t<0}\int_{K_{r}}\int_u^{\bar{k}} H_\varep'(s)\chi_{[s<\bar{k}]}\,\d s \dx\\
&\qquad\ge\widetilde{\gm}(\bar{k}-e_i-\varep)(\xi\om)^{b}|K_{r}|\\
&\qquad\ge \tfrac12\widetilde{\gm}(\dl\xi\om)(\xi\om)^{b}
(\bar\dl\xi\om)^{-2}\essup_{-\bar{\theta} r^p<t<0}\int_{K_r\times\{t\}}\big[u-(\mu^-+\bar\dl\xi\om)\big]_-^2\,\dx.
\end{aligned}
\end{equation*}
In the first inequality above, we have assumed  $\tfrac94\xi\om\le d$ by possibly further restricting
 the choice of $\xi$ in \eqref{Eq:choice-xi}, and hence
$(\mu^-,\bar{k})\subset (e_i-\tfrac14\dl\xi\om,e_i+\tfrac14\dl\xi\om+2\dl\xi\om)\subset (e_i-d,e_i+d)$.
As such the constant $\gm$ in the definition of $\xi$ in \eqref{Eq:choice-xi} depends on $d$
and $M$.
The above analysis together with \eqref{Eq:en-est-gen}
yields the following  energy estimate.
\begin{lemma}\label{Lm:energy}
Let $u$ be a  weak super-solution to \eqref{Eq:reg} with \eqref{Eq:1:2} in $E_T$, under the measure information \eqref{Eq:measure:0}.
Let \eqref{Eq:opposite:1} and \eqref{Eq:opposite:2} hold true.
Denoting $b:=1+\tfrac{N+p}p$ and 
setting $k=\mu^-+\bar\dl\xi\om$ with  $\bar\dl\in(\dl,2\dl)$, there exists a positive constant $\gm$
depending only on the data, such that  for all $\sig\in(0,1)$ and all $r\in[2\rho, 8\rho]$ we have
\begin{align*}
\dl\xi\om( \bar\dl\xi\om)^{-2}(\xi\om)^b&\essup_{ -\bar{\theta}(\sig r)^p<t<0}\int_{K_{\sig r}\times\{t\}}(u-k)_-^2\,\dx
+\iint_{Q_{\sig r}(\bar{\theta})}|D(u-k)_-|^p\,\dx\dt\\
&\le\frac{\gm}{(1-\sig)^p r^p}\iint_{Q_{r}(\bar{\theta})}
		(u-k)^{p}_- \dx\dt\\
		&\quad+ \frac{\gm}{(1-\sig) \bar{\theta} r^p}\iint_{Q_{r}(\bar{\theta})}(u-k)_-^2\,\dx\dt\\
		&\quad+\frac{\gm }{(1-\sig) \bar{\theta} r^p}\iint_{Q_{r}(\bar{\theta})}(u-k)_-\,\dx\dt,
\end{align*}
provided that for some $i\in\{0,1,\cdots,\ell\}$,
\[
|\mu^- - e_i|\le\tfrac14\dl\xi\om\quad\text{ and }\quad \varep\le\tfrac14\dl\xi\om.
\]
\end{lemma}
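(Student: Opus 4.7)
The plan is to apply the energy estimate \eqref{Eq:en-est-gen} on the cylinder $Q_r(\bar\theta)$ at the level $k=\bar k:=\mu^-+\bar\dl\xi\om$, and then to convert the $H'_\varep$-dependent term on its left-hand side into a quantitative lower bound for $\essup\int_{K_{\sig r}}(u-\bar k)_-^2\,\dx$, using the chain of inequalities already derived in the paragraph immediately preceding the lemma.

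Take a product cutoff $\z(x,t)=\z_1(x)\z_2(t)$ with $\z_1\equiv 1$ on $K_{\sig r}$, supported in $K_r$, $|D\z_1|\le \gm/((1-\sig)r)$, and $\z_2\equiv 1$ on $(-\bar\theta(\sig r)^p,0]$, vanishing on $(-\infty,-\bar\theta r^p]$, $|\pl_t\z_2^p|\le \gm/((1-\sig)\bar\theta r^p)$. Since $\z(\cdot,-\bar\theta r^p)\equiv 0$, the two initial-data surface terms in \eqref{Eq:en-est-gen} disappear. Applying the uniform bound $\int_0^{(u-\bar k)_-}H'_\varep(\bar k-s)s\,\ds\le \gm(u-\bar k)_-$ to the right-hand side $H'_\varep$-integral reduces it to an integral of $(u-\bar k)_-$ against $|\pl_t\z^p|$; combined with the $|D\z|^p$ and $(u-\bar k)_-^2|\pl_t\z^p|$ contributions, this produces exactly the three right-hand side terms asserted in the lemma after inserting the cutoff bounds.

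The interesting step is the treatment of the left-hand side: we do \emph{not} discard the (nonnegative) $H'_\varep$-integral. The hypothesis \eqref{Eq:mu-minus} combined with $\varep\le\tfrac14\dl\xi\om$ yields $\bar k-e_i-\varep\ge\tfrac12\dl\xi\om$ and $(\mu^-,\bar k)\subset(e_i-d,e_i+d)$, so that only the single jump at $e_i$ of $H_\varep$ is active on the relevant range. Combining this with the measure bound \eqref{Eq:opposite:3} (the consequence of the violated De Giorgi conditions \eqref{Eq:opposite:1} and \eqref{Eq:opposite:2}) and the pointwise-in-time estimate \eqref{Eq:time:2}, the chain of inequalities displayed in the excerpt gives
\[
\essup_{-\bar\theta r^p<t<0}\int_{K_r}\int_0^{(u-\bar k)_-}H'_\varep(\bar k-s)s\,\ds\,\dx\ \ge\ c\,(\dl\xi\om)(\xi\om)^b(\bar\dl\xi\om)^{-2}\essup_{-\bar\theta r^p<t<0}\int_{K_r}(u-\bar k)_-^2\,\dx,
\]
and this inequality, localized down to $K_{\sig r}\times(-\bar\theta(\sig r)^p,0]$ by means of the cutoff $\z$, furnishes exactly the $\essup$-term on the left-hand side of the lemma with the claimed coefficient. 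The gradient contribution $\iint\z^p|D(u-\bar k)_-|^p\,\dx\,\dt\ge\iint_{Q_{\sig r}(\bar\theta)}|D(u-\bar k)_-|^p\,\dx\,\dt$ is immediate from $\z\ge\chi_{Q_{\sig r}(\bar\theta)}$.

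The main delicate point is the localization of the preliminary $K_r$-bound to the smaller cube $K_{\sig r}$. The lower bound is attained at a time $t_*\in[-\widehat\theta r^p,0]$ extracted in \eqref{Eq:time:1}, so one must verify that $t_*$ sits in the region $(-\bar\theta(\sig r)^p,0]$ where $\z_2\equiv 1$; this reduces to $\widehat\theta\le\bar\theta\sig^p$, i.e.\ $\bar\dl^{\,p-1}\xi\om\le\sig^p$, which holds after restricting the smallness of $\xi\om$ as already encoded in \eqref{Eq:choice-xi}, for the range of $\sig$ that matters downstream. Simultaneously, the measure-concentration step \eqref{Eq:opposite:3} has to be re-run directly on $K_{\sig r}$, which is legitimate as soon as $\sig r\ge\tfrac12\rho$ (so that $Q_{\sig r}(\widehat\theta)\supset Q_{\frac12\rho}(\widehat\theta)$ and the input \eqref{Eq:opposite:1} is inherited). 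With this parameter bookkeeping in place, the three left-hand side contributions of \eqref{Eq:en-est-gen} collapse to the two quantities stated in the lemma.
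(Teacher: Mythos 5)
Your proposal is correct and follows the route the paper implicitly indicates: apply \eqref{Eq:en-est-gen} on $Q_r(\bar\theta)$ at the level $\bar k$, retain (rather than discard) the $H'_\varep$-term on the left-hand side, and use the chain of inequalities built from \eqref{Eq:opposite:3}, \eqref{Eq:time:1}, \eqref{Eq:time:2} and the hypotheses $|\mu^--e_i|\le\tfrac14\dl\xi\om$, $\varep\le\tfrac14\dl\xi\om$ to convert it into the coefficient $\dl\xi\om(\bar\dl\xi\om)^{-2}(\xi\om)^b$ in front of the sup-term. You also correctly flag the only genuinely delicate point, namely that the time-slice extraction and the measure-concentration bound must be re-run on the smaller cylinder $Q_{\sig r}(\widehat\theta)$ (so the $H'_\varep$-mass sits where the cutoff equals one), which quietly restricts $\sig$ to the range $\sig r\ge\tfrac12\rho$ rather than all of $(0,1)$ as the lemma statement loosely asserts; this is harmless since the lemma is only invoked with $\sig\ge 7/8$. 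Note in passing that if you re-run \eqref{Eq:opposite:3} and the time-slice extraction \emph{together} on $Q_{\sig r}(\widehat\theta)$, the new $t_*$ lands in $[-\widehat\theta(\sig r)^p,0]\subset[-\bar\theta(\sig r)^p,0]$ automatically, so your first condition $\widehat\theta\le\bar\theta\sig^p$ reduces to the weaker $\widehat\theta\le\bar\theta$ and is subsumed by the second.
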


Based on  the energy estimate in Lemma~\ref{Lm:energy}, a De Giorgi type lemma can be derived.
Notice that the time scaling used here is different from the one in Lemmas~\ref{Lm:DG:1} -- \ref{Lm:DG:2}.
\begin{lemma}\label{Lm:DG:3}
Suppose the hypotheses in Lemma~\ref{Lm:energy} hold.
Let $\dl\in(0,1)$. There exists a  constant $c_1\in(0,1)$ depending only on the data, such that
if
\[
\big|\big[u<\mu^-+2\dl\xi\om\big]\cap Q_{4\rho}(\widetilde{\theta})\big|\le c_1 (\xi\om)^{b}|Q_{4\rho}(\widetilde{\theta})|,\quad\text{ where }\widetilde{\theta}=(\dl\xi\om)^{1-p},
\]
then enforcing $|\mu^- - e_i |\le\tfrac14\dl\xi\om$ for some $i\in\{0,1,\cdots,\ell\}$ and $\varep\le\tfrac14\dl\xi\om$, we have
\[
u\ge\mu^-+\dl\xi\om\quad\text{ a.e. in }Q_{2\rho}(\widetilde{\theta}),
\]
provided $4^p\widetilde{\theta}\le A \theta$.
\end{lemma}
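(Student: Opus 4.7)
Lemma~\ref{Lm:DG:3} is a De Giorgi type iteration inside the cylinder $Q_{4\rho}(\widetilde\theta)$, shrinking down to $Q_{2\rho}(\widetilde\theta)$; the driving input is the energy estimate provided by Lemma~\ref{Lm:energy}, and the smallness threshold $c_1(\xi\om)^b$ is calibrated precisely to absorb the unusual coefficient $\dl\xi\om(\bar\dl\xi\om)^{-2}(\xi\om)^b$ that sits in front of the essential supremum term there.

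For the set-up, for $n\ge 0$ I would take $r_n:=2\rho(1+2^{-n})$, $\bar\dl_n:=\dl(1+2^{-n})\in(\dl,2\dl)$, $k_n:=\mu^-+\bar\dl_n\xi\om$, and $\bar\theta_n:=(\bar\dl_n\xi\om)^{1-p}\in[2^{1-p}\widetilde\theta,\widetilde\theta]$, so that the nested cylinders $Q_n:=Q_{r_n}(\bar\theta_n)\subset Q_{4\rho}(\widetilde\theta)$ decrease to $Q_{2\rho}(\widetilde\theta)$. Define $A_n:=|[u<k_n]\cap Q_n|$ and $Y_n:=A_n/|Q_n|$, and observe $(u-k_n)_-\le 2\dl\xi\om$. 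Lemma~\ref{Lm:energy} with $r=r_n$, $\bar\dl=\bar\dl_n$, $\sigma=r_{n+1}/r_n$ applies under the two structural hypotheses $|\mu^--e_i|\le\tfrac14\dl\xi\om$ and $\varep\le\tfrac14\dl\xi\om$; using $(u-k_n)_-\le 2\dl\xi\om$ and $\bar\theta_n r_n^p\simeq(\dl\xi\om)^{1-p}\rho^p$, each of the three right-hand-side terms is bounded by $C\,2^{np}\rho^{-p}(\dl\xi\om)^p A_n$, while the coefficient of $\sup_t\int(u-k_n)_-^2\,\dx$ is $c_n\simeq\dl^{-1}(\xi\om)^{(N+p)/p}$.

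Next I would invoke the standard parabolic Gagliardo--Nirenberg embedding with a cutoff supported in $Q_{r_{n+1}}(\bar\theta_n)$ and equal to one on $Q_{n+1}$ to bound $\iint_{Q_{n+1}}(u-k_n)_-^{p(N+2)/N}$ by $C\bigl(\iint|D(u-k_n)_-|^p\bigr)\bigl(\sup_t\int(u-k_n)_-^2\bigr)^{p/N}$. Chebyshev, with $k_n-k_{n+1}=\dl\xi\om/2^{n+1}$, gives
\begin{equation*}
\bigl(\dl\xi\om/2^{n+1}\bigr)^{p(N+2)/N}A_{n+1}\le\iint_{Q_{n+1}}(u-k_n)_-^{p(N+2)/N}.
\end{equation*}
Plugging in the energy bounds (both $\iint|Dv|^p$ and $c_n\sup\int v^2$ are controlled by $C\,2^{np}\rho^{-p}(\dl\xi\om)^p A_n$), dividing by $|Q_{n+1}|\simeq(\dl\xi\om)^{1-p}\rho^{N+p}$, and tracking the exponents of $\dl$, $\xi\om$ and $\rho$ yields
\begin{equation*}
Y_{n+1}\le C\,B^n\,(\xi\om)^{-b}\,Y_n^{1+p/N},\qquad b=1+\tfrac{N+p}{p},
\end{equation*}
with $B$ depending only on the data. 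The classical fast-geometric-convergence lemma (e.g.\ \cite[Chapter~I, Lemma~4.1]{DB}) then forces $Y_n\to 0$ provided $Y_0\le c_1(\xi\om)^b$ for a sufficiently small $c_1=c_1(\text{data})$, which is precisely the hypothesis; passing to the limit gives $u\ge\mu^-+\dl\xi\om$ a.e.\ in $Q_{2\rho}(\widetilde\theta)$.

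The only delicate step --- and the main obstacle such as it is --- is the scaling bookkeeping: verifying that the factor $c_n^{-p/N}\simeq\dl^{p/N}(\xi\om)^{-(N+p)/N}$ coming from the unusual essential supremum coefficient combines with the $(\dl\xi\om)^p$ right-hand-side bound and the intrinsic volume $|Q_n|\simeq(\dl\xi\om)^{1-p}\rho^{N+p}$ to produce exactly the factor $(\xi\om)^{-b}$ in the iterative inequality, so that the smallness hypothesis matches what the iteration lemma requires. Everything else is textbook De Giorgi iteration for parabolic $p$-Laplacian type equations; the hypothesis $4^p\widetilde\theta\le A\theta$ is used only to ensure that $Q_{4\rho}(\widetilde\theta)$ lies inside the reference cylinder on which Lemma~\ref{Lm:energy} is available.
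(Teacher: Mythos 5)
Your overall strategy is sound and is a valid alternative to the paper's: you run the De Giorgi iteration by applying Chebyshev's inequality first, $\bigl(\dl\xi\om/2^{n+1}\bigr)^{p(N+2)/N}A_{n+1}\le\iint v^{p(N+2)/N}$, and then the parabolic Sobolev inequality, which produces the exponent $1+p/N$; the paper instead applies H\"older first (splitting off $|A_n|^{1-\frac{N}{p(N+2)}}$) and then Sobolev, which produces the exponent $1+\frac1{N+2}$. Both are legitimate De Giorgi recursions and both are calibrated to absorb the extra factor $\dl^{-1}(\xi\om)^{b-1}$ that weights the essential supremum term in Lemma~\ref{Lm:energy}. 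The varying time scale $\bar\theta_n=(\bar\dl_n\xi\om)^{1-p}$ you introduce is harmless: the cylinders are still nested since $\bar\theta_n r_n^p=(\dl\xi\om)^{1-p}(2\rho)^p(1+2^{-n})$ decreases, and the initial cylinder sits inside $Q_{4\rho}(\widetilde\theta)$.

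However, the ``scaling bookkeeping'' that you yourself single out as the delicate step contains an arithmetic slip. Tracking the powers of $\dl$, $\xi\om$, and $\rho$ along your route (with $\sup_t\int v^2\lesssim\dl(\xi\om)^{-(N+p)/p}\rho^{-p}(\dl\xi\om)^p A_n$ after dividing by the coefficient $c_n$), the $\dl$- and $\rho$-powers cancel as they should, but the $(\xi\om)$-power comes out as
\[
Y_{n+1}\le C\,B^n\,(\xi\om)^{-\frac{N+2p}{N}}\,Y_n^{1+\frac pN},
\]
not $(\xi\om)^{-b}$ as you wrote (here $b=\tfrac{2p+N}{p}$, so $\tfrac{N+2p}{N}\ne b$ unless $N=p$). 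This matters: the fast geometric convergence lemma applied to $Y_{n+1}\le K B^n Y_n^{1+\alpha}$ with $\alpha=p/N$ requires $Y_0\lesssim K^{-N/p}$. With the correct coefficient $K\sim(\xi\om)^{-(N+2p)/N}$, the threshold is $(\xi\om)^{\frac{N+2p}{p}}=(\xi\om)^b$, exactly matching the hypothesis; with your written coefficient $(\xi\om)^{-b}$, the threshold would be $(\xi\om)^{bN/p}$, which does not match $(\xi\om)^b$ and for $N>p$ would force $c_1$ to depend on $\om$. So the method is fine, but as stated the recursion you wrote does not close with a data-dependent $c_1$; the exponent in the coefficient must be corrected to $\tfrac{N+2p}{N}$.
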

\begin{proof} 
For $n=0,1,\cdots,$ we set 
\begin{align*}
	\left\{
	\begin{array}{c}
	\displaystyle k_n=\mu^-+ \dl\xi\om+\frac{\dl\xi\om}{2^{n}},\quad \tilde{k}_n=\frac{k_n+k_{n+1}}2,\\[5pt]
	\displaystyle \varrho_n=2\varrho+\frac{\varrho}{2^{n-1}},
	\quad\tilde{\varrho}_n=\frac{\varrho_n+\varrho_{n+1}}2,\\[5pt]
	\displaystyle K_n=K_{\varrho_n},\quad \widetilde{K}_n=K_{\tilde{\varrho}_n},\\[5pt] 
	\displaystyle Q_n=Q_{\rho_n}(\widetilde{\theta}),\quad
	\widetilde{Q}_n=Q_{\tilde\rho_n}(\widetilde{\theta}).
	\end{array}
	\right.
\end{align*}
We will use the energy estimate in Lemma~\ref{Lm:energy} with the pair of cylinders $\widetilde{Q}_n\subset Q_n$.
Note that the constant $\bar\dl$ in Lemma~\ref{Lm:energy} is replaced by $(1+2^{-n})\dl$, as indicated in the definition of $k_n$.
Enforcing $|\mu^- - e_i|\le\tfrac14\dl\xi\om$ and $\varep\le\tfrac14\dl\xi\om$, the energy estimate in Lemma~\ref{Lm:energy} yields that
\begin{align*}
	(\dl\xi\om)^{-1}(\xi\om)^b&\essup_{-\widetilde{\theta}\tilde{\varrho}_n^p<t<0}
	\int_{\widetilde{K}_n} (u-\tilde{k}_n)_-^2\,\dx
	+
	\iint_{\widetilde{Q}_n}|D(u-\tilde{k}_n)_-|^p \,\dx\dt\\
	&\qquad\le
	 \gm \frac{2^{pn}}{\varrho^p}(\dl\xi\om)^{p}|A_n|,
\end{align*}
where
	$A_n:=\big[u<k_n\big]\cap Q_n$.

Let $0\le\phi\le1$ be a cutoff function that vanishes on the parabolic boundary of $\widetilde{Q}_n$
and equals the identity in $Q_{n+1}$. An application of the H\"older inequality, the Sobolev imbedding \cite[Chapter I, Proposition~3.1]{DB}
and the above energy estimate give that
\begin{align*}
	\frac{\dl\xi\om}{2^{n+3}}
	|A_{n+1}|
	&\le 
	\iint_{\widetilde{Q}_n}\big(u-\tilde{k}_n\big)_-\phi\,\dx\dt\\
	&\le
	\bigg[\iint_{\widetilde{Q}_n}\big[\big(u-\tilde{k}_n\big)_-\phi\big]^{p\frac{N+2}{N}}
	\,\dx\dt\bigg]^{\frac{N}{p(N+2)}}|A_n|^{1-\frac{N}{p(N+2)}}\\
	&\le \gm
	\bigg[\iint_{\tilde{Q}_n}\big|D\big[(u-\tilde{k}_n)_-\phi\big]\big|^p\,
	\dx\dt\bigg]^{\frac{N}{p(N+2)}}\\
	&\quad\ 
	\times\bigg[\essup_{-\widetilde{\theta}\tilde{\varrho}_n^p<t<0}
	\int_{\widetilde{K}_n}\big(u-\tilde{k}_n\big)^{2}_-\,\dx\bigg]^{\frac{1}{N+2}}
	 |A_n|^{1-\frac{N}{p(N+2)}}\\
	&\le 
	\gm (\xi\om)^{-\frac{b}{N+2}}(\dl\xi\om)^{\frac1{N+2}}
	\bigg[(\dl\xi\om)^{p}\frac{2^{pn}}{\rho^p}\bigg]^{\frac{N+p}{p(N+2)}}
	|A_n|^{1+\frac{1}{N+2}}. 
\end{align*}
In terms of $  Y_n=|A_n|/|Q_n|$, the recurrence is rephrased as
\begin{equation*}
	 Y_{n+1}
	\le
	\frac{\gm C^n}{(\xi\om)^{\frac{b}{N+2}}} 
	Y_n^{1+\frac{1}{N+2}},
\end{equation*}
for a constant $\gm$ depending only on the data and with $C=C(N,p)$.
Hence, by \cite[Chapter I, Lemma~4.1]{DB}, 
there exists
a positive constant $c_1$ depending only on the data, such that
$Y_n\to0$ if we require that $Y_o\le c_1(\xi\om)^b$.
This concludes the proof. 
\end{proof}

The next lemma concerns the smallness of the measure density of the set $[u\approx\mu^-]$.
Its proof relies on \eqref{Eq:energy-bis} and the measure information \eqref{Eq:measure:0} will be employed.
\begin{lemma}\label{Lm:shrink:1}
Let $u$ be a  weak super-solution to \eqref{Eq:reg} with \eqref{Eq:1:2} in $E_T$, under the measure information \eqref{Eq:measure:0}.
There exists a positive constant $\gm$ depending only on the data, such that
for any $j_*\in\nn$ we have 
\begin{equation*}
	\Big|\Big[
	u\le\mu^-+\frac{\xi \om}{2^{j_*}}\Big]\cap Q_{4\rho}(\widetilde{\theta})\Big|
	\le\frac{\gm}{ \bar\al} j_*^{-\frac{p-1}p} |Q_{4\rho}(\widetilde{\theta})|,
	\quad\text{ where }\widetilde{\theta}=\Big(\frac{\xi\om}{2^{j_*}}\Big)^{1-p},
\end{equation*}
with $\bar\al$ as in \eqref{Eq:choice-beta}, provided $4^p\widetilde{\theta}\le A \theta$.
\end{lemma}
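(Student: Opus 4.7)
The plan is to run a De Giorgi measure-propagation iteration along the dyadic levels $k_j := \mu^- + \xi\om/2^j$, $j = 0, 1, \dots, j_*$, combining the pointwise density lower bound \eqref{Eq:measure:0} with the energy inequality \eqref{Eq:energy-bis}. Writing
\[
A_j := \bigl|[u < k_j] \cap Q_{4\rho}(\widetilde\theta)\bigr|,
\]
the target bound is $A_{j_*} \le (\gm/\bar\al)\, j_*^{-(p-1)/p}\,|Q_{4\rho}(\widetilde\theta)|$, which will emerge from a telescoping summation of a recursion between $A_{j+1}$ and $A_j - A_{j+1}$.

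The hypothesis $4^p\widetilde\theta \le A\theta$ places every time slice of $Q_{4\rho}(\widetilde\theta)$ inside the interval where \eqref{Eq:measure:0} is valid, so at each such $t$, and for every index $j$ for which $\xi/2^j \le \bar\xi$, one has
\[
\bigl|[u(\cdot,t) > k_j] \cap K_{4\rho}\bigr| \ge \bar\al\,|K_\rho| = \tfrac{\bar\al}{4^N}|K_{4\rho}|.
\]
The first substantive step is to apply the classical De Giorgi isoperimetric inequality at each $t$ between the levels $k_{j+1} < k_j$, integrate in time, and use Hölder on the remaining integral $\int |Du|\,\chi_{[k_{j+1}<u<k_j]}$ to produce
\[
\frac{\xi\om}{2^{j+1}}\,A_{j+1} \le \frac{\gm\rho}{\bar\al}\Bigl(\iint_{Q_{4\rho}(\widetilde\theta)}|D(u-k_j)_-|^p\,\dx\dt\Bigr)^{1/p}\,(A_j - A_{j+1})^{(p-1)/p}.
\]

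For the gradient integral I would use the energy estimate \eqref{Eq:energy-bis} in the pair $Q_{4\rho}(\widetilde\theta) \subset Q_{8\rho}(\widetilde\theta)$ with $k = k_j$, exploiting $(u-k_j)_- \le \xi\om/2^j$ and the calibration $\widetilde\theta = (\xi\om/2^{j_*})^{1-p}$. A short comparison, valid for $0 \le j \le j_*$ and $p \ge 2$, shows that both time-derivative contributions $(u-k_j)_-^2/\widetilde\theta$ and the Stefan-induced linear term $(u-k_j)_-/\widetilde\theta$ are dominated by the spatial term $(u-k_j)_-^p/\rho^p$, yielding
\[
\iint_{Q_{4\rho}(\widetilde\theta)}|D(u-k_j)_-|^p\,\dx\dt \le \gm\,\frac{(\xi\om)^p}{\rho^p\,2^{jp}}\,|Q_{8\rho}(\widetilde\theta)|.
\]
Plugging this into the preceding inequality, cancelling the common $\xi\om/2^j$, and raising to the power $p/(p-1)$ gives
\[
A_{j+1}^{p/(p-1)} \le \Bigl(\frac{\gm}{\bar\al}\Bigr)^{p/(p-1)}|Q_{8\rho}(\widetilde\theta)|^{1/(p-1)}\,(A_j - A_{j+1}).
\]
Using $A_{j_*} \le A_{j+1}$ for $j < j_*$ and summing over $j = 0, \dots, j_*-1$ telescopes the right-hand side into $A_0 - A_{j_*} \le |Q_{8\rho}(\widetilde\theta)|$, whence
\[
j_*\, A_{j_*}^{p/(p-1)} \le \Bigl(\frac{\gm}{\bar\al}\Bigr)^{p/(p-1)}|Q_{8\rho}(\widetilde\theta)|^{p/(p-1)};
\]
extracting the $(p-1)/p$th root and absorbing the dimensional factor between $Q_{8\rho}$ and $Q_{4\rho}$ into $\gm$ closes the argument.

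The main obstacle I anticipate is controlling the Stefan-induced linear term $(u-k_j)_-|\pl_t\z^p|$ in \eqref{Eq:energy-bis} by the $p$-homogeneous gradient term; this is precisely where the calibration of $\widetilde\theta$ to the \emph{deepest} level $\xi\om/2^{j_*}$ is essential, and where $p \ge 2$ enters decisively (the analogous step for singular $p$ would fail). A secondary bookkeeping point is that \eqref{Eq:measure:0} applies at level $\bar\xi\om$, so indices $j$ with $\xi/2^j > \bar\xi$ must be dealt with separately; for those the inequality is trivial since $A_{j_*} \le |Q_{4\rho}(\widetilde\theta)|$ and the finitely many lost levels can be absorbed into the constant.
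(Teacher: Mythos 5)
Your proposal follows the paper's proof essentially verbatim: the same dyadic levels $k_j=\mu^-+\xi\om/2^j$ (with $\widetilde\theta$ calibrated to the deepest level), the same use of the energy estimate \eqref{Eq:energy-bis} on $Q_{8\rho}(\widetilde\theta)\supset Q_{4\rho}(\widetilde\theta)$ to bound $\iint|D(u-k_j)_-|^p$, the same slicewise De Giorgi isoperimetric inequality combined with the density floor from \eqref{Eq:measure:0} and H\"older, and the same $p/(p-1)$-power plus telescoping summation. The only cosmetic difference is your concern about indices $j$ with $\xi/2^j>\bar\xi$, which is moot since the paper already arranges $\xi<\bar\xi$ from \eqref{Eq:choice-xi-bar} and \eqref{Eq:choice-xi}.
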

\begin{proof}
We employ the energy estimate \eqref{Eq:energy-bis} in $Q_{8\rho}(\widetilde{\theta})$
with a standard cutoff function $\z$ that vanishes on the parabolic boundary of $Q_{8\rho}(\widetilde{\theta})$
and equals the identity in $Q_{4\rho}(\widetilde{\theta})$, satisfying $|D\z|\le \gm/\rho$ and $|\pl_t\z|\le\gm/(\widetilde\theta\rho^p)$.
The levels are chosen to be
\begin{equation*}
	k_j:=\mu^-+\frac{\xi \om}{2^{j}},\quad j=0,1,\cdots, j_*.
\end{equation*}

Therefore, assuming $j_*$ has been chosen, 
and taking into account the definition of $ \widetilde{\theta}(j_*)$, the energy estimate \eqref{Eq:energy-bis} yields that
\begin{equation*}
\begin{aligned}
	\iint_{Q_{4\rho}(\widetilde{\theta})}|D(u-k_j)_-|^p\,\dx\dt&\le\frac{\gm}{\varrho^p}\bigg(\frac{\xi \om}{2^j}\bigg)^p\bigg[1+\frac1{\widetilde{\theta}}\bigg(\frac{\xi\om}{2^j}\bigg)^{1-p}\bigg] |A_{j,8\rho}|\\
	&\le\frac{\gm}{\varrho^p}\bigg(\frac{\xi \om}{2^j}\bigg)^p|A_{j,8\rho}|,
\end{aligned}
\end{equation*}
where $ A_{j,8\rho}:= \big[u<k_{j}\big]\cap Q_{8\varrho}(\widetilde{\theta})$.
 
Observing $\xi<\bar\xi$ from \eqref{Eq:choice-xi-bar} and \eqref{Eq:choice-xi}, we may derive
the measure theoretical information from \eqref{Eq:measure:0}:
\begin{equation*}
	\big|\big[u(\cdot, t)>\mu^-+\xi \om\big]\cap K_{4\varrho}\big|\ge\bar\al 8^{-N} |K_{4\varrho}|
	\qquad\mbox{for all $t\in\big(-\widetilde{\theta}(4\rho)^p,0\big]$.}
\end{equation*}
With this information at hand, we apply \cite[Chapter I, Lemma 2.2]{DB} slicewise 
to $u(\cdot,t)$ for 
$t\in\big(-\widetilde{\theta}(4\rho)^p,0\big]$,
 over the cube $K_{4\varrho}$,
for levels $k_{j+1}<k_{j}$, followed by an application of H\"older's inequality.
Indeed, we estimate
\begin{align*}
	&(k_j-k_{j+1})\big|\big[u(\cdot, t)<k_{j+1} \big]
	\cap K_{4\varrho}\big|
	\\
	&\qquad\le
	\frac{\gm \varrho^{N+1}}{\big|\big[u(\cdot, t)>k_{j}\big]\cap K_{4\varrho}\big|}	
	\int_{ [k_{j+1}<u(\cdot,t)<k_{j}] \cap K_{4\varrho}}|Du(\cdot,t)|\,\dx\\
	&\qquad\le
	\frac{\gm\varrho}{\bar\al}
	\bigg[\int_{ [k_{j+1}<u(\cdot,t)<k_{j}] \cap K_{4\varrho}}|Du(\cdot,t)|^p\,\dx\bigg]^{\frac1p}
	\big|\big[ k_{j+1}<u(\cdot,t)<k_{j}\big]\cap K_{4\varrho}\big|^{1-\frac1p}
	\\
	&\qquad=
	\frac{ \gm\varrho}{\bar\al}
	\bigg[\int_{K_{4\varrho}}|D(u-k_j)_-(\cdot,t)|^p\,\dx\bigg]^{\frac1p}
	\big[ |A_{j,4\rho}(t)|-|A_{j+1,4\rho}(t)|\big]^{1-\frac1p},
\end{align*}
where we have set $ A_{j,4\rho}(t):= \big[u(\cdot,t)<k_{j}\big]\cap K_{4\varrho}$.
We perform an integration in $\d t$ over the interval $\big(-\widetilde{\theta}(4\rho)^p,0\big]$  on both sides
 and apply H\"older's inequality.
Setting $A_{j,4\rho}:=[u<k_j]\cap  Q_{4\rho}(\widetilde{\theta})$, we arrive at
\begin{align*}
	\frac{\xi \om}{2^{j+1}}\big|A_{j+1,4\rho}\big|
	&\le
	\frac{\gm\varrho}{\bar\al}\bigg[\iint_{Q_{4\rho}(\widetilde{\theta})}|D(u-k_j)_-|^p\,\dx\dt\bigg]^\frac1p
	\big[|A_{j,4\rho}|-|A_{j+1,4\rho}|\big]^{1-\frac{1}p}\\
	&\le
	\frac{\gm}{\bar\al} \frac{\xi \om}{2^j}|A_{o,8\rho}|^{\frac1p}\big[|A_{j,4\rho}|-|A_{j+1,4\rho}|\big]^{1-\frac{1}p}.
\end{align*}
%
Now take the power $\frac{p}{p-1}$ on both sides of the above inequality to obtain
\[
	\big|A_{j+1,4\rho}\big|^{\frac{p}{p-1}}
	\le
	\left(\frac{\gm}{\bar\al}\right)^{\frac p{p-1}}|A_{o,8\rho}|^{\frac1{p-1}}\big[|A_{j,4\rho}|-|A_{j+1,4\rho}|\big].
\]
Add these inequalities from $0$ to $j_*-1$ to obtain
\[
	j_* |A_{j_*,4\rho}|^{\frac{p}{p-1}}\le\left(\frac{\gm}{\bar\al}\right)^{\frac p{p-1}}|A_{o,8\rho}|^{\frac1{p-1}}|A_{o,4\rho}|,
\]
from which we easily obtain
\[
	|A_{j_*,4\rho}|\le\frac{\gm}{\bar\al}{j_*^{-\frac{p-1}p}}|Q_{4\rho}(\widetilde\theta)|.
\]
%
%
The proof is completed.
\end{proof}

\subsection{Reduction of oscillation near the infimum: Part IV}\label{S:reduc-osc-3}  
Under the conditions 
\eqref{Eq:opposite:1} and \eqref{Eq:opposite:2},
we may reduce the oscillation in the following way. First of all,
let $\xi$ be determined in \eqref{Eq:choice-xi}.
Then we choose, according to Lemma~\ref{Lm:shrink:1}, the integer $j_*$ so large to satisfy that
\[
\frac{\gm}{ \bar\al j_*^{\frac{p-1}p}}\le c_1(\xi\om)^b,
\]
 where $c_1$ is the constant appearing in Lemma~\ref{Lm:DG:3}.
 According to \eqref{Eq:choice-beta} and \eqref{Eq:choice-xi}, the dependence of $j_*$ can be traced by
\begin{equation}\label{Eq:choice-j}
j_*=\om^{-q_o}\exp\big\{\gm(\text{data})\om^{-q_1}\big\},
\end{equation}
for some positive $\{q_o,q_1\}$ depending on the data.

Next, we can fix $2\dl=2^{-j_*}$ in Lemma~\ref{Lm:DG:3}.
Consequently, by the choice of $j_*$ in  \eqref{Eq:choice-j}, Lemma~\ref{Lm:DG:3} can be applied,
assuming that $|\mu^- - e_i|\le\tfrac14\dl\xi\om$ for some $i\in\{0,1,\cdots,\ell\}$ and $\varep\le\frac14\dl\xi\om$, and we arrive at
\[
u\ge\mu^-+ \dl\xi\om \quad\text{ a.e. in }Q_{2\rho}(\widetilde{\theta}),
\]
where we may trace, recalling \eqref{Eq:choice-xi},
\begin{equation}\label{Eq:choice-dl-xi}
\dl=\exp\Big\{-\om^{-q_o}\exp\big\{\gm\om^{-q_1}\big\}\Big\}\quad\text{ and }\quad
\xi= \exp\Big\{- \gm\om^{-q_2}\Big\}
\end{equation}
for some generic $\gm$ and some positive $\{q_o, q_1, q_2\}$ determined by  the data.
This would give us a reduction of oscillation
\begin{equation}\label{Eq:reduc-osc-2}
\essosc_{Q_{2\rho}(\widetilde{\theta})}u\le(1-\dl\xi )\om
\end{equation}
with the  above-defined $\dl$ and $\xi$. The choice of $A$ can be finally made from
$8^p\widetilde{\theta}\le A\theta$ as required in \eqref{Eq:A-control}, i.e.  $A\ge 2^{p+4}\om^{-1}(\dl\xi)^{1-p}$.
Thus we may choose
\begin{equation}\label{Eq:choice-A}
A(\om)=\exp\Big\{\exp\big\{\gm \om^{-q}\big\}\Big\}
\end{equation}
for some properly defined positive $\gm$ and $q$ depending only on the data.

To summarize the achievements in \S\S~\ref{S:1st-alt} -- \ref{S:reduc-osc-3},  taking 
the reverse of \eqref{Eq:start-cylinder},
 \eqref{Eq:reduc-osc-0},  \eqref{Eq:reduc-osc-1} and \eqref{Eq:reduc-osc-2} all into account,
 if   $|\mu^- - e_i|\le\tfrac14\dl\xi\om$ for some $i\in\{0,1,\cdots,\ell\}$ and $\varep\le\frac14\dl\xi\om$ hold true, then  for $\theta=(\tfrac14\om)^{2-p}$ we have that
\begin{equation}\label{Eq:reduc-osc-3}
\text{ either }\ \ \ \om\le  \gm\Big(\ln^{(2)}\frac1{\rho}\Big)^{-\frac1q}\quad
\text{ or }\ \  \essosc_{Q_{\frac14\rho}(\theta)}u\le \big(1-\eta(\om)\big)\om,
\end{equation}
where 
\[
\eta=\exp\left\{-\exp\Big\{\exp\big\{\gm \om^{-q}\big\}\Big\}\right\},
\]
for some properly defined positive $\gm$ and $q$ depending only on the data.
\subsection{Reduction of oscillation near the infimum: Part V}\label{S:reduc-osc-4} 
Let $\xi(\om)$ and $\dl(\om)$ be determined in \eqref{Eq:choice-dl-xi}.
The analysis throughout  \S\S~\ref{S:reduc-osc-1} -- \ref{S:reduc-osc-3} 
has been founded on the condition \eqref{Eq:mu-minus}.
We now examine the case
when \eqref{Eq:mu-minus} does not hold, namely,
\begin{equation}\label{Eq:mu-minus-opp}
|\mu^- - e_i|>\tfrac14\dl\xi\om\quad\text{ for all }i\in\{0,1,\cdots,\ell\}.
\end{equation}
Notice that the analysis in \S~\ref{S:2nd-alt} does not rely on the condition \eqref{Eq:mu-minus},
and thus the measure information \eqref{Eq:measure:0} derived there is still at our disposal.
In view of the dependences of $\dl$ and $\xi$ in \eqref{Eq:choice-dl-xi}
and that of $\bar\xi$ in \eqref{Eq:choice-xi-bar}, we may assume that $\dl\xi<\bar\xi$
and  that \eqref{Eq:measure:0} holds true with $\bar\xi$ replaced by $\dl\xi$.

Next,  for $\widetilde\xi\in(0,\tfrac18)$ we introduce the levels
$k=\mu^-+  \widetilde\xi\dl\xi\om$.  
According to \eqref{Eq:mu-minus-opp} and assuming that $\varep\le\frac14\dl\xi\om$,
the energy estimate \eqref{Eq:en-est-gen}$_-$ written in $Q_{\rho}(\vartheta)\subset Q_\rho(A\theta)$ for some $0<\vartheta<A\theta$ yields that 
\begin{equation*}
\begin{aligned}
	\essup_{-\vartheta\rho^p<t<0}&\int_{K_\rho \times\{t\}}	
	\z^p (u-k)_-^2\,\dx 
	+
	\iint_{Q_{\rho}(\vartheta)}\z^p|D(u-k)_-|^p\,\dx\dt\nonumber\\
	&\le
	\gm\iint_{Q_{\rho}(\vartheta)}
		\Big[
		(u-k)^{p}_-|D\z|^p + (u-k)_-^2|\partial_t\z^p|
		\Big]
		\,\dx\dt.
\end{aligned}
\end{equation*}
Given this energy estimate and the measure information \eqref{Eq:measure:0},
the theory of parabolic $p$-Laplacian in \cite{DB} applies; see also \cite[Appendix~A]{Liao-Stefan} for tracing the constants. 
\begin{lemma}\label{Lm:reduc-osc-2}
Let $u$ be a  weak super-solution to \eqref{Eq:reg} with \eqref{Eq:1:2} in $E_T$.
Suppose \eqref{Eq:measure:0} and \eqref{Eq:mu-minus-opp} hold true, and $\varep\le\frac14\dl\xi\om$. 
There exists a positive constant $\widetilde\xi$ depending on
the data and $\bar\al$ of \eqref{Eq:choice-beta}, 
such that for $ \vartheta=(\widetilde\xi\dl\xi\om)^{2-p}$ we have
\[
\essosc_{Q_{\frac14\rho}( \vartheta)}u\le (1-  \widetilde\xi\dl\xi)\om,
\]
provided $\vartheta\le A\theta$.
Moreover, the dependence of $\widetilde\xi$ can be traced by 
$$\widetilde\xi=\gm(\text{\rm data}) \exp\big\{-\bar\al^{-\frac{p}{p-1}}\big\}.$$
\end{lemma}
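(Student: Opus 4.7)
My plan is to show that, under the separation hypothesis \eqref{Eq:mu-minus-opp} and the smallness $\varep\le\tfrac14\dl\xi\om$, truncations of $u$ at any level $k=\mu^-+\widetilde\xi\dl\xi\om$ with $\widetilde\xi\in(0,\tfrac18)$ avoid the jump region of $\be_\varep$ entirely. Consequently, the general energy estimate \eqref{Eq:en-est-gen} for $(u-k)_-$ collapses to the purely $p$-Laplacian inequality already reproduced in the excerpt just before the lemma, and a classical reduction-of-oscillation argument for the parabolic $p$-Laplacian, combined with the a priori measure information \eqref{Eq:measure:0}, delivers the conclusion.

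For the key geometric step, recall that $H_\varep=\sum_{i}\nu_i\mathcal{H}_{e_i}^\varep$ has derivative supported in $\bigcup_i[e_i,e_i+\varep]$. In the integral $\int_0^{(u-k)_-}H_\varep'(k-s)\,s\,\d s$, when $u<k$ the change of variables $w=k-s$ sends $s\in[0,k-u]$ to $w\in[u,k]$; since $u\ge\mu^-$ a.e.~and $k\le\mu^-+\tfrac18\dl\xi\om$, this range lies in $[\mu^-,\mu^-+\tfrac18\dl\xi\om]$. By \eqref{Eq:mu-minus-opp}, every $e_i<\mu^-$ satisfies $e_i+\varep\le e_i+\tfrac14\dl\xi\om<\mu^-$, whereas every $e_i>\mu^-$ satisfies $e_i>\mu^-+\tfrac14\dl\xi\om>k$; either way $[e_i,e_i+\varep]\cap[u,k]=\emptyset$, so the integrand vanishes identically. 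This annihilates both the interior $H_\varep'$-term in \eqref{Eq:en-est-gen} and its counterpart at the initial time, reducing the energy estimate exactly to the standard $p$-Laplacian inequality displayed immediately before the lemma.

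Combining this estimate with \eqref{Eq:measure:0}---which, thanks to the ordering $\dl\xi<\bar\xi$ coming from \eqref{Eq:choice-dl-xi} and \eqref{Eq:choice-xi-bar}, yields
\[
\big|[u(\cdot,t)\ge\mu^-+\dl\xi\om]\cap K_\rho\big|>\bar\al|K_\rho|\qquad\text{for all }t\in(-A\theta\rho^p,0]
\]
---places us precisely in the setting of the classical reduction of oscillation near the infimum for the parabolic $p$-Laplacian developed in \cite[Chapter~III]{DB}. That theory supplies a $\widetilde\xi\in(0,\tfrac18)$, depending only on the data and $\bar\al$, such that $u\ge\mu^-+\widetilde\xi\dl\xi\om$ a.e.~in $Q_{\frac14\rho}(\vartheta)$ with $\vartheta=(\widetilde\xi\dl\xi\om)^{2-p}$, which is equivalent to the claimed oscillation decay. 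The explicit form $\widetilde\xi=\gm(\text{data})\exp\{-\bar\al^{-p/(p-1)}\}$ is then read off by tracing constants through the De Giorgi--Moser iteration exactly as in \cite[Appendix~A]{Liao-Stefan}. The only delicate point in the whole argument is the geometric verification in the previous paragraph; once the jump region is certified to lie outside $[u,k]$, no genuinely new ingredient beyond the standard parabolic $p$-Laplacian machinery is required.
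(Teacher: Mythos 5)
Your proposal is correct and follows the paper's argument essentially verbatim: you annihilate the $H_\varep'$-terms in \eqref{Eq:en-est-gen} by the separation \eqref{Eq:mu-minus-opp} and $\varep\le\tfrac14\dl\xi\om$, then feed the resulting $p$-Laplacian energy estimate and the measure information \eqref{Eq:measure:0} into the standard DiBenedetto machinery (with constants traced as in \cite[Appendix~A]{Liao-Stefan}). The only thing you add over the paper's terse presentation is the explicit check that $[u,k]\subset[\mu^-,\mu^-+\tfrac18\dl\xi\om]$ misses every interval $[e_i,e_i+\varep]$, which is exactly the detail the paper suppresses when it says ``According to \eqref{Eq:mu-minus-opp} ... the energy estimate ... yields.''
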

\begin{remark}\upshape
Note that the choice of $A$ in \eqref{Eq:choice-A} verifies $\vartheta\le A\theta$.
\end{remark}
\subsection{Derivation of the modulus of continuity}\label{S:modulus}
This is the final part of the proof of Theorem~\ref{Thm:1:1}. 
Let us summarize what has been achieved by the previous sections.
To do so, we will first assume that $\om\le1$.
According to \eqref{Eq:reduc-osc-3} and Lemma~\ref{Lm:reduc-osc-2},
we have 
\begin{equation}\label{Eq:reduc-osc-4}
\essosc_{Q_{\frac14\rho}(\theta)}u\le \big(1-\eta(\om)\big)\om\quad
\text{ or }\quad \om\le  \gm\Big(\ln^{(2)}\frac1{\rho}\Big)^{-\frac1q}\quad\text{ or }\quad 
\om\le  \gm\Big(\ln^{(2)}\frac1{\varep}\Big)^{-\frac1q},
\end{equation}
where $\theta=(\tfrac14\om)^{2-p}$ and 
\[
\eta=\exp\left\{-\exp\Big\{\exp\big\{\gm \om^{-q}\big\}\Big\}\right\},
\]
for some properly defined positive $\gm$ and $q$ depending only on the data.

In order to iterate the arguments, we set
\[
\om_1:=\max\Big\{\big(1-\eta(\om)\big)\om, \gm\Big(\ln^{(2)}\frac1{\rho}\Big)^{-\frac1q}\Big\}
\]
and seek $\rho_1$ to verify the set inclusions, recalling $A$ from \eqref{Eq:choice-A}:
\[
Q_{\rho_1}(A_1\theta_1)\subset Q_{\frac14\rho}(\theta),\quad Q_{\rho_1}(A_1\theta_1)\subset  Q_o,
\]
where $\theta_1:=(\tfrac14\om_1)^{2-p}$, $A_1:=A(\om_1)$. 
Note that we may assume  $\eta(\om)\le\frac12$, which yields $\om_1\ge\frac12\om$. Then we estimate
\[
A_1\theta_1\rho_1^p\le A_1(\tfrac18\om)^{2-p}\rho_1^p,
\]
and hence choose
\[
A_1(\tfrac18\om)^{2-p}\rho_1^p=\theta(\tfrac14\rho)^p,\quad\text{ i.e. }\quad\rho_1^p=2^{2-3p}A_1^{-1}\rho^p.
\]
It is not hard to verify that  the other set inclusion also holds with such a choice of $\rho_1$.
Consequently, we arrive at
\[
\essosc_{Q_{\rho_1}(A_1\theta_1)}u\le \om_1
\]
which takes the place of \eqref{Eq:start-cylinder}$_2$ in the next stage.
Repeating the arguments of \S\S~\ref{S:1st-alt} -- \ref{S:reduc-osc-4},
we obtain that 
\begin{equation*}
\essosc_{Q_{\frac14\rho_1}(\theta_1)}u\le \big(1-\eta(\om_1)\big)\om_1\quad
\text{ or }\quad \om_1\le  \gm\Big(\ln^{(2)}\frac1{\rho_1}\Big)^{-\frac1q}\quad\text{ or }\quad 
\om_1\le  \gm\Big(\ln^{(2)}\frac1{\varep}\Big)^{-\frac1q}.
\end{equation*}

Now we may   construct for $n\in\nn$,
\begin{equation*}
\left\{
\begin{array}{cc}
\rho_o=\rho,\quad  \rho_{n+1}^p=2^{2-3p} A_n^{-1}\rho_n^p,\quad A_n=A(\om_n),\,\quad\theta_n=(\tfrac14\om_n)^{2-p},\\[5pt]
\dsty\om_o=\om,\quad \om_{n+1}=\max\Big\{\om_n\big(1-\eta(\om_n)\big),\,\gm\Big(\ln^{(2)}\frac1{\rho_n}\Big)^{-\frac1q}\Big\},\\[5pt]
Q_n=Q_{\frac14\rho_n}(\theta_n),\quad
Q'_n=Q_{\rho_n}(A_n \theta_n).
\end{array}\right.
\end{equation*}
By induction, if up to some $j\in\nn$, we have 
\[
\om_{n}>  \gm\Big(\ln^{(2)}\frac1{\varep}\Big)^{-\frac1q} \quad\text{ for all }n\in\{0,1,\cdots, j-1\},
\]
then  for all $n\in\{0,1,\cdots, j\}$, there holds
\[
Q'_{n}\subset Q_{n-1},\quad\essosc_{Q_n} u\le \om_n.
\]
On the other hand, we denote  by $j$ the first index to satisfy
\begin{equation}\label{Eq:osc-esp}
\om_{j}\le  \gm\Big(\ln^{(2)}\frac1{\varep}\Big)^{-\frac1q}.  
\end{equation}

Observe that if there exist $n_o\in\nn$ and a sequence $\{a_n\}$ satisfying
\[
a_{n+1}\ge\max\Big\{a_n\big(1-\eta(a_n)\big),\,\gm\Big(\ln^{(2)}\frac1{\rho_n}\Big)^{-\frac1q}\Big\}
\]
for all $n\ge n_o$, and meanwhile $a_{n_o}\ge \om_{n_o}$, then $a_n\ge \om_n$ for all $n\ge n_o$.
We may choose
\[
a_n=\Big(\ln^{(3)} (n+a)\Big)^{-\sig}
\]
for some proper $\sig\in(0,\tfrac1q)$ and an absolute constant $a$, such that $a_o\ge1$.  Since we have assumed $\om\le1$, we have $a_o\ge \om_o$
and hence, $a_n\ge \om_n$ for all $n\ge0$.

Let us take $4r\in(0,\rho)$. If for some $n\in\{0,1,\cdots, j\}$, we have
\[
\rho_{n+1}\le 4r<\rho_{n},
\]
then the right-hand side inequality yields
\begin{equation}\label{Eq:osc-est}
\essosc_{Q_{r}(\om^{2-p})} u\le\essosc_{Q_n} u \le\om_n\le  \Big(\ln^{(3)} (n+a)\Big)^{-\sig}.
\end{equation}
Next we examine the left-hand side inequality. For this purpose, we first note that
it may be assumed that $\eta(\om_n)\le\frac12$. Hence, we estimate $\om_n\ge(\frac12)^n\om$,
\[
A(\om_n)\le\exp\Big\{\exp\Big\{\frac{2^{qn}}{\om^q}\Big\}\Big\},
\]
and 
\begin{align*}
(4r)^p\ge\rho_{n+1}^p&\ge 2^{2-3p}\exp\Big\{-\exp\Big\{\frac{2^{qn}}{\om^q}\Big\}\Big\}\rho_n^p\\
&\ge 2^{ n(2-3p)}\exp\Big\{-\sum_{i=0}^{n}\exp\Big\{\frac{2^{qi}}{\om^q}\Big\}\Big\}\rho^p\\
&\ge 2^{ n(2-3p)}\exp\Big\{-\exp\Big\{\frac{2^{(q+1)n}}{\om^{q+1}}\Big\}\Big\}\rho^p.
\end{align*}
By taking logarithm on both sides, we estimate
\[
n\ge\frac1{(q+1)\ln2}\ln^{(3)}\frac{\rho}{cr} + |\ln \om|,
\]
for some absolute constant $c>0$.
Substituting it back to \eqref{Eq:osc-est}, we obtain
\[
\essosc_{Q_{r}(\om^{2-p})} u\le C \Big(\ln^{(6)}\frac{\rho}{cr}\Big)^{-\sig},
\]
for  some $C>0$ depending on the data and $\om$.

Finally, if $4r<\rho_{j+1}$ where $j$ is the first index for  \eqref{Eq:osc-esp} to hold, then we may use  \eqref{Eq:osc-esp}
and
\[
\essosc_{Q_{r}(\om^{2-p})}u\le \essosc_{Q_{j}} u\le \om_{j},
\]
 to incorporate the $\varep$ term into the oscillation estimate:
 \[
\essosc_{Q_{r}(\om^{2-p})}u \le C \Big(\ln^{(6)}\frac{\rho}{cr}\Big)^{-\sig}+\gm\Big(\ln^{(2)}\frac1{\varep}\Big)^{-\frac1q}.
\]
Now according to our assumption in \S~\ref{S:1:3} we may let $\varep\to0$ and obtain the desired
modulus of continuity.

The assumption that $\om\le1$ at the beginning of this section is not restrictive.
For otherwise, the same arguments in the previous sections generate quantities
$$\big\{\eta, \dl, \xi,\bar\xi,\widetilde\xi, \al,\bar\al,\kappa, A\big\}$$ depending only on the data, but independent of $\om$.
Consequently, instead of \eqref{Eq:reduc-osc-4}, we end up with
\begin{equation*}
\essosc_{Q_{\frac14\rho}(\theta)}u\le (1-\eta )\om\quad
\text{ or }\quad \om\le  4 (A\rho )^{\frac1{p-2}}\quad\text{ or }\quad 
\om\le  \frac{4\varep}{\dl\xi}.
\end{equation*}
Given this, we may set up an iteration scheme as before and iterate $n_*=n_*(\text{data})$ times, such that 
\[
\essosc_{Q_{\rho_{*}}(A \theta_{*})} u\le \om_*<1\quad\text{ where }\theta_*=(\tfrac14\om_*)^{2-p},
\]
for some $\rho_*$ and $\om_*$ depending on $n_*$. 

Without loss of generality,  due to \eqref{Eq:choice-A}, we may take $A=A(\om_*)$.
As such the above intrinsic relation plays the role of \eqref{Eq:start-cylinder} and 
the previous arguments can be reproduced.

%
%
%


\end{document}